\newtheorem{theo}{Theorem}%
\newtheorem{prop}{Proposition}%
\newtheorem{defi}[prop]{Definition}%
\newtheorem{lemm}[prop]{Lemma}%
\newtheorem{exam}[prop]{Example}%
\newcommand{\Bk}{\color{black}}
\newcommand{\Rd}{\color{red}}
\def\one{\mathbf 1}
\def\ac{\mathcal A}
\def\bc{\mathcal B}
\def\eef{\mathbb E}
\def\fc{\mathcal F}
\def\gc{\mathcal G}
\def\lc{\mathcal L}
\def\mc{\mathcal M}
\def\F{\mathcal F}
\def\G{\mathcal G}
\def\H{\mathcal H}
\def\K{\mathcal K}
\def\Z{\mathbf Z}
\def\zzf{\mathbf Z}
\def\P{{\mathbb P}}
\def\R{{\mathbb R}}
\def\odc{[\![}
\def\fdc{]\!]}
\begin{document}
\title{Filtrations at the threshold of standardness}
\author{Ga\"el Ceillier, Christophe Leuridan}
\maketitle

\begin{abstract}
A.~Vershik discovered that filtrations indexed by 
the non-positive integers may have a paradoxical asymptotic behaviour 
near the time $-\infty$, called non-standardness. For example, two dyadic 
filtrations with trivial tail $\sigma$-field are not necessarily 
isomorphic. Yet, any essentially separable filtration indexed by the 
non-positive integers becomes standard when sufficiently many integers 
are skipped. 

In this paper, we focus on the non standard filtrations which become 
standard if (and only if) infinitely many integers are skipped.
We call them filtrations at the threshold of standardness, since they
are as close to standardardness as they can be although they are non-standard.

Two class of filtrations are studied, first the filtrations 
of the split-words processes, second some filtrations inspired 
by an unpublished 
example of B.~Tsirelson. They provide examples which disproves some naive 
intuitions. For example, it is possible to have a standard filtration 
extracted from a non-standard one with no intermediate (for extraction) 
filtration at the threshold of standardness. It is also possible to have 
a filtration which provides a standard filtration on the even times 
but a non-standard filtration on the odd times.   
\end{abstract}

\noindent\textbf{MSC 2010:} 60G05, 60J10.

\noindent\textbf{keywords:} filtrations, standardness, split-words processes.

\section*{Introduction}

The notion of standardness has been introduced by A.~Vershik~\cite{Vershik}
in the context of decreasing sequences of measurable partitions
indexed by the non-negative integers.
Vershik's definition and characterizations of standardness have been
translated their original ergodic theoretic formulation into 
a probabilistic language by
M.~\'Emery and W.~Schachermayer \cite{Emery-Schachermayer}.
In this framework, the objects of focus
are the filtrations indexed by non-positive integers. These are the
non-decreasing sequences $(\fc_n)_{n \le 0}$ of sub-$\sigma$-fields of
a probability space $(\Omega,\ac,\P)$.

All the sub-$\sigma$-fields of $\ac$ that we will consider
are assumed to be complete and essentially separable with
respect to $\P$.
By definition, a sub-$\sigma$-field of $(\Omega,\ac,\P)$ is
separable if it can be generated as a complete $\sigma$-field by
a sequence of events, or equivalently, by some real random variable.
One can check that a sub-$\sigma$-field $\bc \subset \ac$ is separable 
if and only if the Hilbert space $L^2(\Omega,\bc,\P)$ is separable.
\Bk

Almost all filtrations that we will consider in this study have the following
property:
for each $n$, $\fc_n$ is generated by $\fc_{n-1}$ and by some random variable
$U_n$ which is independent of $\fc_{n-1}$ and uniformly distributed
on some finite set with $r_n$ elements, for some sequence $(r_n)_{n \le 0}$
of positive integers. Such filtrations are called $(r_n)_{n \le 0}$-adic.

For such filtrations, as shown by Vershik~\cite{Vershik},
standardness turns out to be tantamount to a simpler, much more
intuitive property: an $(r_n)$-adic filtration $\fc$ is standard
if and only if $\fc$ is of product type, that is, $\fc$ is the natural
filtration of some process $V = (V_n)_{n\le 0}$ where the $V_n$ are
independent random variables; in this case, it is easy to see
that the process $V$ can be chosen with the same law as $U= (U_n)_{n\le 0}$.
So, at first reading, `standard' can be replaced with `of product
type' in this introduction.

Although intuitive, the notion of product-type filtrations is not
as simple as one could believe. For example, the assumption
that the tail $\sigma$-field $\fc_{-\infty} = \bigcap_{n \le 0} \fc_n$
is trivial, and the property $\fc_n = \fc_{n-1} \vee \sigma(U_n)$ for
every $n \le 0$ do not ensure that $(\fc_n)_{n \le 0}$ is generated
by $(U_n)_{n \le 0}$. In the standard case, $(\fc_n)_{n \le 0}$ can
be generated by some other sequence $(V_n)_{n \le 0}$ of independent
random variables which has the same law as $(U_n)_{n \le 0}$.
In the non-standard case, no sequence of independent random
variables can generate the filtration $(\fc_n)_{n \le 0}$.

The first examples of such a situation were given by
Vershik~\cite{Vershik}. By modifying and generalizing one of these
examples, M.~Smorodinsky~\cite{Smorodinsky}
and \'Emery and Schachermayer~\cite{Emery-Schachermayer}
introduced the split-words processes.

The law of a split-words process depends on an alphabet $A$,
endowed with some probability measure, and a decreasing sequence
$(\ell_n)_{n \le 0}$ of positive integers (the lengths of the words) such that
$\ell_0=1$ and the ratios $r_n = \ell_{n-1}/\ell_n$ are integers.
For the sake of simplicity, we consider here only finite alphabets 
endowed with the uniform measure. 

A split-words process is an inhomogeneous Markov process
$((X_n,U_n))_{n \le 0}$ such that for every $n \le 0$:
\begin{itemize}
\item $(X_n,U_n)$ is uniform on $A^{\ell_n} \times \odc 1,r_n \fdc$.
\item $U_n$ is independent of $\fc^{(X,U)}_{n-1}$.
\item if one splits the word $X_{n-1}$ (of length $\ell_{n-1} = r_n\ell_n$)
into $r_n$ subwords of lengths $\ell_n$, then $X_n$ is the
$U_n$-th subword of $X_{n-1}$.
\end{itemize}
Such a process is well-defined since the sequence of uniform laws on
the sets $A^{\ell_n} \times \odc 1,r_n \fdc$ is an entrance law for the transition
probabilities given above.
By construction, the natural filtration $\fc^{X,U}$ of $((X_n,U_n))_{n \le 0}$
is $(r_n)_{n \le 0}$-adic. One can check that the tail $\sigma$-field
$\fc^{X,U}_{-\infty}$ is trivial. Thus, it is natural to ask whether
$\fc^{X,U}$ is standard or not. 

Whether a split-words process with lengths $(\ell_n)_{n \le 0}$ generates
a standard filtration
or not is completely characterised~: the filtration is non-standard
if and only if
$$\sum_{n}\frac{ \ln(r_{n})}{ \ell_{n}} < +\infty \hfill \quad (\Delta).$$
Note that this condition does not depend on the alphabet $A$. 

In this statement, the `if' part and a partial converse have been
proved by Vershik~\cite{Vershik} (in a very similar framework) and by
S.~Laurent~\cite{Laurent-thesis}.
The `only if' part has been proved by D.~Heicklen~\cite{Heicklen}
(in Vershik's framework) and by G.~Ceillier~\cite{Ceillier}.
The generalization to arbitrary alphabets has been performed by
Laurent in \cite{Laurent-standardness}: the characterisations
and all the results below still hold are when the
alphabet is a Polish space endowed with some probability measure. 

Although these examples are rather simple to construct, proving
the non-standardness requires sharp tools like Vershik's
standardness criterion~\cite{Vershik,Emery-Schachermayer}. 
One can also use the I-cosiness criterion of
\'Emery and Schachermayer~\cite{Emery-Schachermayer} which may be seen 
as more intuitive by probabilists. Actually, 
Laurent proved directly that both criteria are actually
equivalent. Moreover, applying these criteria to the examples above 
leads to rather technical estimations. 

Another question concerns what happens to a filtration
when time is accelerated by extracting a subsequence. Clearly,
every subsequence of a standard filtration is still standard.
But Vershik's lacunary isomorphism theorem~\cite{Vershik} states
that from {\it any} filtration
$(\fc_n)_{n \le 0}$ such that $\fc_0$ is essentially separable
and $\fc_{-\infty}$ is trivial, 
one can extract a filtration $(\fc_{\phi(n)})_{n \le 0}$ which is standard.
This striking fact is mind-boggling for anyone who is interested
by the boundary between standardness and non-standardness. A natural
question arises:
\begin{center}
when $(\fc_n)_{n \le 0}$ is not standard, how close to identity 
the increasing map $\phi$ (from $\zzf_-$ to $\zzf_-$) provided by the
lacunary isomorphism theorem can be?
\end{center}
Of course, as standardness is an asymptotic property, the extracting map
$\phi$ has to skip an infinity of times integers (equivalently,
$\phi(n)-n \to -\infty$ as $n \to -\infty$).

In~\cite{Vershik}, Vershik provides an example of a non-standard
dyadic filtration $(\fc_n)_{n \le 0}$ such that $(\fc_{2n})_{n \le 0}$ is standard.
Gorbulsky also provides such an example in~\cite{Gorbulsky}.

Using the fact that
the family of split-words filtrations is stable by extracting 
subsequences,
Ceillier exhibits in~\cite{Ceillier} an example of a non-standard filtration
$(\fc_n)_{n \le 0}$ which is as close to standardness as it can be:
every subsequence $(\fc_{\phi(n)})_{n \le 0}$ is standard as soon as $\phi$
skips an infinity of integers.

This paper is devoted to the filtrations sharing this property. We call
them filtrations {\it at the threshold of standardness}.

\subsection*{Main results and organization of the paper}

Some definitions and classical facts used in the paper are recalled 
in an annex, at the end of the paper. In the sections~\ref{split-words} 
and~\ref{Tsirelson} which are the core of the paper, two class of 
filtrations are studied, first the filtrations 
of the split-words processes, second some filtrations inspired 
by an unpublished example of B.~Tsirelson. 

\paragraph{The case of split-words filtrations}
The first part deals with split-words filtrations.

First, we characterise the filtrations at the threshold of standardness
among the split-words filtrations.

\begin{prop}~\label{characterisation of the threshold}
A split-words filtration with lengths $(\ell_n)_{n \le 0}$ is at the
threshold of standardness
if and only if
$$\sum_{n \le 0} \frac{\ln(r_{n})}{\ell_{n}} < +\infty \hfill \quad (\Delta)$$
and
$$\inf_{n \le 0} \frac{\ln(r_{n}r_{n-1})}{\ell_{n}} >0 \hfill \quad (\star).$$
\end{prop}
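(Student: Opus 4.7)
The plan is to reduce the statement to an analytical condition on the sequences $(r_n)$ and $(\ell_n)$, exploiting the fact (noted in the introduction) that the family of split-words filtrations is stable under extraction: the extracted filtration $(\fc_{\phi(n)})_{n \le 0}$ is itself the split-words filtration with lengths $\ell'_n = \ell_{\phi(n)}$ and ratios $r'_n = \ell_{\phi(n-1)}/\ell_{\phi(n)}$. By the characterisation of non-standardness for split-words filtrations recalled in the introduction, the filtration is at the threshold of standardness if and only if $(\Delta)$ holds for $(r_n,\ell_n)$ and, for every strictly increasing $\phi : \Z_- \to \Z_-$ skipping infinitely many integers, the series $\sum_n \ln(r'_n)/\ell'_n$ diverges. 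Under the hypothesis $(\Delta)$, I would then show this second property to be equivalent to $(\star)$.

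For the direction $(\star) \Rightarrow$ threshold, suppose $(\star)$ yields $\inf_n \ln(r_n r_{n-1})/\ell_n = c > 0$, and let $\phi$ skip infinitely many integers. Then there are infinitely many indices $n$ with $\phi(n-1) \le \phi(n) - 2$, and for each such $n$, monotonicity of $(\ell_m)$ gives $r'_n \ge \ell_{\phi(n)-2}/\ell_{\phi(n)} = r_{\phi(n)-1} r_{\phi(n)}$, whence $\ln(r'_n)/\ell'_n \ge c$; summing over these indices makes the extracted series diverge, which gives standardness of the extracted filtration.

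For the contrapositive, assume $(\Delta)$ holds but $(\star)$ fails. I would pick indices $n_k \to -\infty$ with $\ln(r_{n_k} r_{n_k - 1})/\ell_{n_k} \to 0$, then pass to a sparse enough subsequence so that $n_{k+1} \le n_k - 3$ and $\sum_k \ln(r_{n_k} r_{n_k-1})/\ell_{n_k} < \infty$. Let $\phi$ be the strictly increasing enumeration of $\Z_- \setminus \{n_k - 1 : k\}$; then $\phi$ skips infinitely many integers, and thanks to the sparsity of $(n_k)$ the extracted series decomposes cleanly as
$$\sum_m \frac{\ln(r'_m)}{\ell'_m} = \sum_{j \in \Z_- \setminus \{n_k,\, n_k - 1\,:\,k\}} \frac{\ln(r_j)}{\ell_j} + \sum_k \frac{\ln(r_{n_k} r_{n_k-1})}{\ell_{n_k}},$$
in which both sums are finite, the first by $(\Delta)$ and the second by construction. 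So the extracted filtration still satisfies $(\Delta)$ and is non-standard, showing that $\fc$ is not at the threshold.

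The main delicate point will be to write the extraction of a split-words filtration carefully as a split-words filtration with the announced parameters $(r'_n,\ell'_n)$; once this identification is in place, the rest is elementary bookkeeping on the two series above, choosing the subsequence $(n_k)$ sparse enough to make the decomposition orthogonal.
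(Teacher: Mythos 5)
Your proof is correct and follows essentially the same route as the paper's: identify the extracted filtration as a split-words filtration with ratios $R_n = \ell_{m(n)}/\ell_n$, bound $R_n \ge r_{\phi(n)-1}r_{\phi(n)}$ at skipped indices to get divergence under $(\star)$, and under $\neg(\star)$ remove one integer just below each of a sparse sequence of bad indices $n_k$ to keep the extracted series convergent. The only cosmetic difference is that the paper phrases the extraction via a subset $B \subset \Z^-$ and partitions it into $B_1 = B\cap(B+1)$ and $B_2 = B\setminus(B+1)$, whereas you work with the extracting map $\phi$; the underlying decomposition of the series is the same.
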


Next, we characterise (among the split-words filtrations)
the filtrations that cannot be extracted from
any split-words filtration at the threshold of standardness.

\begin{prop}~\label{not extracted from a filtration at the threshold}
If
$$\sum_{n \le 0} \frac{\ln(r_{n})}{\ell_{n}} = +\infty
\hfill \quad (\neg \Delta)$$
and
$$\lim_{n \to -\infty} \frac{\ln(r_{n})}{\ell_{n}} = 0
\hfill \quad (\Box),$$
then any split-words filtration with lengths $(\ell_n)_{n \le 0}$
is standard but
cannot be extracted from a split-words filtration at the threshold of
standardness.
\end{prop}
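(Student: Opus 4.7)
The plan is to argue by contradiction. The standardness assertion is immediate from the characterization of standardness of split-words filtrations recalled in the introduction: $(\neg \Delta)$ is precisely the negation of the non-standardness criterion. For the main claim, I would assume that the split-words filtration $\fc$ with lengths $(\ell_n)$ is extracted from a split-words filtration $\fc'$ at the threshold of standardness, with lengths $(\ell'_m)$ and ratios $(r'_m)$, via some strictly increasing $\phi:\zzf_-\to\zzf_-$ with $\fc_n=\fc'_{\phi(n)}$, and try to derive a contradiction from $(\neg\Delta)$ and $(\Box)$.

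The stability of split-words filtrations under extraction forces the adic structures to coincide: the ratios of $\fc$ are
$$r_n \;=\; \prod_{k=\phi(n-1)+1}^{\phi(n)} r'_k,$$
and, normalised so that $\ell_0=1$, the lengths of $\fc$ are $\ell_n=\ell'_{\phi(n)}/\ell'_{\phi(0)}$. By Proposition~\ref{characterisation of the threshold}, the threshold property of $\fc'$ provides a constant $c>0$ with $\ln(r'_m)+\ln(r'_{m-1})\ge c\,\ell'_m$ for every $m\le 0$. Taking $m=\phi(n)$, using $\ln(r'_{\phi(n)})\le\ln(r_n)$ (a single non-negative summand of $\ln(r_n)$), and using $(\Box)$ in the form $\ln(r_n)=o(\ell_n)=o(\ell'_{\phi(n)})$, I would deduce that $\ln(r'_{\phi(n)-1})\ge(c/2)\,\ell'_{\phi(n)}$ for all sufficiently negative $n$.

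I would then split according to the behaviour of the gap $\phi(n)-\phi(n-1)$. In the case where this gap is $\ge 2$ for arbitrarily negative $n$, the index $\phi(n)-1$ belongs to $\odc \phi(n-1)+1,\phi(n)\fdc$, so $\ln(r'_{\phi(n)-1})$ is one of the summands making up $\ln(r_n)$; hence $\ln(r_n)/\ell_n$ is bounded below by a positive constant, contradicting $(\Box)$. Otherwise, the gap equals $1$ for all $n$ below some $N_0$, i.e., $\phi$ is eventually a translation of $\zzf_-$; then the ratios and normalised lengths of $\fc$ agree with those of $\fc'$ (up to an index shift) on a tail of $\zzf_-$, so the convergence of $\sum_m \ln(r'_m)/\ell'_m$ ($(\Delta)$ for $\fc'$) transfers to the convergence of a tail of $\sum_n\ln(r_n)/\ell_n$, contradicting $(\neg\Delta)$.

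The analytic heart of the argument is the elementary pairing between the lower bound $(\star)$ for the ratios of $\fc'$ and the upper bound $(\Box)$ for those of $\fc$. What makes the case analysis necessary is that $(\star)$ produces a lower bound on $\ln(r'_{\phi(n)-1})$, a quantity which contributes to either $\ln(r_n)$ or $\ln(r_{n-1})$ depending on whether the gap at step $n$ is large or unit. The main obstacle should therefore be organising this case split cleanly and handling the bookkeeping around the constant factor $\ell'_{\phi(0)}$ relating $\ell_n$ to $\ell'_{\phi(n)}$; since this factor is constant in $n$, it does not affect either $(\Box)$ or the divergence of $\sum\ln(r_n)/\ell_n$, and the argument goes through unchanged.
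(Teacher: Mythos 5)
Your proof is correct and follows essentially the same route as the paper: both split into the case where $\phi$ skips infinitely many integers (there $(\Box)$ together with the observation that $r'_{\phi(n)}$ and $r'_{\phi(n)-1}$ both divide $r_n$ forces $(\star)$ to fail for $\H$) and the case where $\phi$ is eventually a translation (there the tails of the two filtrations coincide, so standardness of $\F$ forbids $\H$ from being non-standard). The only stylistic difference is that the paper bounds $\ln\bigl(r'_{\phi(n)}r'_{\phi(n)-1}\bigr)$ by $\ln(r_n)$ in one step and reads off the failure of $(\star)$ directly, whereas you isolate the two factors and route the contradiction back through $(\Box)$ via the intermediate bound $\ln(r'_{\phi(n)-1})\ge(c/2)\ell'_{\phi(n)}$; both are valid.
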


One could think that the threshold of standardness is a kind of boundary
between standardness and non-standardness. Yet, the situation is not so
simple. Indeed, proposition~\ref{not extracted from a filtration at the
threshold} provides an example
(example~\ref{no intermediate at the threshold})
of two split-words filtrations, where
\begin{itemize}
\item the first one is non-standard,
\item the second one is standard,
\item the second one is extracted from the first one,
\item yet, no intermediate filtration (for extraction) is at the
threshold of standardness.
\end{itemize}

Furthermore, we provide an example of a non-standard split-words
filtration from which no filtration at the threshold of standardness
can be extracted (example~\ref{no extracted filtration at the threshold}).
The proof relies on theorem~\ref{various types} below.

Recall that, given any
filtration $(\F_n)_{n \le 0}$
and an infinite subset $B$ of $\Z^-$, the extracted filtration
$(\F_n)_{n \in B}$ is standard if and only if the complement
$B^c = \zzf_- \setminus B$ is large enough in a certain way.
Here, the meaning of
``large enough'' depends on the filtration $\F$ considered. When
$\F$ is at the threshold of standardness, ``large enough'' means
exactly ``infinite''. But various types of transition from
non-standardness to standardness are possible, and the next
theorem provides some other possible conditions.

\begin{theo}~\label{various types}
Let $(\alpha_n)_{n \le 0}$ be any sequence of non-negative real numbers.
There exists a split-words filtration $(\fc_n)_{n \le 0}$ such that 
for every infinite subset $B$ of $\zzf_-$, the extracted filtration
$(\fc_n)_{n \in B}$ is standard if and only if
$$\sum_{n \in B^c} \alpha_n = +\infty \text{ or } \sum_{n \le 0}
\one_{[n \notin B,\,n+1\notin B]} = +\infty.$$
\end{theo}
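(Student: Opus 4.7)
The plan is to reduce the theorem to an analytic condition on the length sequence $(\ell_n)_{n\le 0}$ of a split-words filtration, and then to construct $(\ell_n)$ explicitly. For an infinite subset $B\subset \zzf_-$, the extracted filtration is itself a split-words filtration, with lengths $(\ell_n)_{n\in B}$ and successive ratios $\ell_{p(n)}/\ell_n$, where $p(n)$ is the predecessor of $n$ in $B$; after a telescoping rewriting, criterion $(\Delta)$ applied to this extracted filtration says that $(\fc_n)_{n\in B}$ is non-standard if and only if
$$\Sigma(B):=\sum_{k\le 0}\frac{\ln r_k}{\ell_{s(k)}}<+\infty,\qquad s(k):=\min\{m\in B:\,m\ge k\}.$$
So I need to build $(\ell_n)$ so that $\Sigma(B)=+\infty$ exactly when $\sum_{n\in B^c}\alpha_n=+\infty$ or $B^c$ contains infinitely many consecutive pairs.

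The next step is to decompose $\Sigma(B)$ along the maximal gaps $\{k,\ldots,m-1\}\subset B^c$: such a gap contributes $(1/\ell_m)\sum_{i=k}^{m-1}\ln r_i$. In particular, a size-$1$ gap $\{k\}$ (with $k-1,k+1\in B$) contributes exactly $\ln r_k/\ell_{k+1}$, whereas a size-$\ge 2$ gap contributes at least $\ln r_k/\ell_{k+2}$. The design goal is therefore to choose $(\ell_n)$ such that, for every $k\le 0$,
$$\ln r_k/\ell_{k+1}\ge \alpha_k,\qquad \eta_k:=\ln r_k/\ell_{k+1}-\alpha_k\text{ is summable over }k,\qquad \ln r_k/\ell_{k+2}\ge c,$$
for some fixed $c>0$. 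Granted these, on the one hand when $B^c$ contains infinitely many consecutive pairs $\Sigma(B)=+\infty$ follows from the $c$-lower bound on size-$\ge 2$ gap contributions; on the other hand, when $B^c$ has only finitely many consecutive pairs, all but finitely many elements of $B^c$ are isolated and thus correspond to size-$1$ gaps, so $\Sigma(B)$ equals $\sum_{k\in B^c}\alpha_k$ up to a summable error (from the $\eta_k$'s) and finitely many exceptional contributions, yielding the iff.

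The construction of $(\ell_n)$ is then a backward induction from $\ell_0=1$: at step $n$, with $\ell_{n+1},\ldots,\ell_0$ already defined, the integer $r_n\ge 2$ is chosen so that
$$\ln r_n\ =\ \max\bigl(\alpha_n\ell_{n+1},\ c\,\ell_{n+2},\ G_n\bigr)\ +\ O(1),\qquad \ell_{n-1}:=r_n\ell_n.$$
The first two arguments of the max implement the lower bounds $\ln r_k/\ell_{k+1}\ge\alpha_k$ and $\ln r_k/\ell_{k+2}\ge c$; the auxiliary term $G_n$ is a large-but-controlled growth insert (a convenient choice involves $\log(\alpha_{n-1}+1)$ together with a polynomial in $|n|$), whose role is to force $r_{n+1}$ to dominate $\alpha_n$ by a summable factor. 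This makes $\sum_n a_n=\sum_n \ln r_n/\ell_n$ finite, i.e.\ the untouched filtration non-standard, which is precisely the case $B=\zzf_-$ of the theorem.

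The main difficulty is that $(\alpha_n)$ may be arbitrarily wild (in particular unbounded), so $G_n$ must be chosen both to guarantee $\sum a_n<+\infty$ and not to spoil the summability of $(\eta_n)$. One verifies case by case, according to which argument of the max dominates, the bound $\eta_k\le c/r_{k+2}+G_k/\ell_{k+1}$; the rapid (at least doubly-exponential) growth of $\ell_n$ forced by the $c\,\ell_{n+2}$ entry in the max makes both terms summable, possibly after enlarging $G_n$ by an $\alpha$-dependent additive term engineered so that $\ell_{n+1}$ dominates the $\alpha$-factors appearing in $G_n/\ell_{n+1}$. The iff then follows by combining these estimates with the two-case analysis of the second paragraph.
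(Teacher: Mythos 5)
Your reduction of the problem to the analytic condition $\Sigma(B)=\sum_{k\le 0}\ln r_k/\ell_{s(k)}$ is correct and is exactly what happens in the paper after resummation (the paper groups by $n\in B$ via the predecessor distance $m(n)$, yielding the sets $B_1,B_2,B_3$; your grouping by $k$ and by gaps of $B^c$ is an equivalent re-parametrization of the same double sum). Likewise, your three ``design goals'' are precisely the content of the paper's Lemma~\ref{estimates}: $\ln r_k/\ell_{k+1}$ small, $\ln r_k/\ell_{k+2}$ comparable to $\alpha_{k+1}$, and $\ln r_k/\ell_{k+3}$ bounded below by a constant (with a shift of index, your version encodes them as $\ln r_k/\ell_{k+1}\approx \alpha_k$ and $\ln r_k/\ell_{k+2}\ge c$). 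So the strategy is sound and parallels the paper's.

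The gap is in the construction of $(\ell_n)$. You never reduce to a bounded, bounded-below-away-from-zero sequence $\alpha$, and your backward recursion cannot absorb arbitrary $\alpha$. Concretely, when the max is $G_n$ (which, with $\alpha_n=0$, happens as soon as $G_n>c\,\ell_{n+2}$), you have $\eta_n\approx G_n/\ell_{n+1}$ with $G_n\approx\log(\alpha_{n-1}+1)$. But $\ell_{n+1}$ is already fixed at step $n$ by $r_{n+2},r_{n+3},\dots$, which only saw $\alpha_{n+1},\alpha_{n+2},\dots$; there is no constraint linking $\ell_{n+1}$ to $\alpha_{n-1}$. So by choosing $\alpha_{n-1}$ huge (e.g.\ tower-exponential in $|n|$) and $\alpha_n=0$ you can force $\eta_n$ to be arbitrarily large, and this can be repeated along infinitely many $n$, destroying summability of $(\eta_k)$. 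Your escape clause, ``enlarging $G_n$ so that $\ell_{n+1}$ dominates,'' is circular: enlarging $G_n$ modifies $\ell_{n-1}$, not $\ell_{n+1}$; to make $\ell_{n+1}$ large you would need to modify $G_{n+2}$, which in turn would need to anticipate $\alpha_{n-1}$, producing an infinite regress. The paper sidesteps all of this at the outset by replacing $\alpha_n$ with $\min(\max(\alpha_n,1/|n+2|^2),1)$, observing this does not change the nature of $\sum_{n\in B^c}\alpha_n$; with $\alpha$ thus confined to $[1/|n+2|^2,1]$, the clean recursion $\ell_{n-2}=2^{\lfloor\alpha_{n-1}\ell_n\rfloor}$ yields all three estimates with no auxiliary $G_n$ term and no case analysis. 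Adding this normalization step would repair your proof; without it the construction as written does not go through.
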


Theorem~\ref{various types} immediately provides other interesting
examples. For example, it may happen that $(\fc_{2n})_{n \le 0}$ is standard 
while $(\fc_{2n-1})_{n \le 0}$ is not, or vice versa. When this phenomenon 
occurs, we will
say that the filtration $(\fc_{n})_{n \le 0}$ ``interlinks''
standardness and non-standardness.

Repeated interlinking is possible. By suitably slowing time suitably
in a filtration at the threshold of standardness
(example~\ref{repeated interlinking}),
one gets can a filtration $(\fc_{n})_{n \le 0}$ such that
$(\fc_{2n})_{n \le 0}$, $(\fc_{4n})_{n \le 0}$, $(\fc_{8n})_{n \le 0}$,...
are non-standard, whereas $(\fc_{2n+1})_{n \le 0}$, $(\fc_{4n+2})_{n \le 0}$,
$(\fc_{8n+4})_{n \le 0}$,... are standard.

\paragraph{Improving on an example of Tsirelson}
In a second part, we study another type of filtrations inspired by
a construction of Tsirelson in unpublished notes~\cite{Tsirelson}.

Tsirelson
has constructed an inhomogeneous discrete Markov process $(Z_n)_{n \le 0}$
such that the random variables $(Z_{2n})_{n \le 0}$ are independent
and such that the natural filtration $(\fc^Z_n)_{n \le 0}$ is non-standard 
although its tail $\sigma$-field is trivial. This example is illuminating 
since ``simple'' reasons explain why the standardness criteria do not hold 
and no technical estimates are required.
Tsirelson's construction relies on a particular structure of the
triples $(Z_{2n-2},Z_{2n-1},Z_{2n})$ that we explain. We call ``bricks'' 
these triples.

In this paper, we give a modified and simpler construction which provides
stronger results by requiring more on the bricks: in our construction,
for every $n \le 0$,
$Z_{2n-2}$ is a deterministic function of $Z_{2n-1}$
and $Z_{2n-1}$ is a deterministic function of $(Z_{2n-2},Z_{2n})$,
hence the filtration $(\fc^Z_{2n})_{n \le 0}$ is generated by the
sequence $(Z_{2n})_{n \le 0}$ of independent random variables.
Yet, $(\fc^Z_{2n-1})_{n \le 0}$ is not standard. Thus the filtration
$\fc^Z$ ``interlinks'' standardness and non-standardness. Actually,
we have a complete characterisation of the standard filtrations
among the filtrations extracted from $\fc^Z$.

\begin{theo}~\label{existence theorem}
There exists a Markov process $(Z_n)_{n \le 0}$ such that
\begin{itemize}
\item for each for $n \le 0$, $Z_n$ takes its values in some finite set $F_n$.
\item the random variables $(Z_{2n})_{n \le 0}$ are independent.
\item for each for $n \le 0$, $Z_{2n-1}$ is a
measurable deterministic function of $(Z_{2n-2},Z_{2n})$.
\item the filtration $(Z_n)_{n \le 0}$ is $(r_n)_{n \le 0}$-adic for some sequence
$(r_n)_{n \le 0}$.
\item for any infinite subset $D$ of $\zzf_-$, the filtration
$(\fc^Z_{n})_{n \in D}$ is standard if and only if $2n-1 \notin D$
for infinitely many $n \le 0$.
\end{itemize}
In particular, the filtration
$(\fc^Z_{2n-1})_{n \le 0}$ is at the threshold of standardness.
\end{theo}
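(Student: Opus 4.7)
The plan is in three stages: first construct the bricks, then verify the structural properties, then prove the standardness dichotomy. Following the Tsirelson framework alluded to in the paper, I would take $Z_{2n}$ uniform on some finite set $F_n$, the $(Z_{2n})_{n \le 0}$ being mutually independent, and define $Z_{2n-1} = g_n(Z_{2n-2}, Z_{2n})$ where $g_n$ is a deterministic map chosen so that (i) $Z_{2n-2}$ is recoverable from $Z_{2n-1}$ as a projection, (ii) conditionally on $Z_{2n-2} = z$, $Z_{2n-1}$ is uniform on a set of cardinality $r_{2n-1}$ independent of $z$, and (iii) the maps $(g_n)_{n \le 0}$ encode a ``hidden orientation''---a cocycle-type invariant---that propagates across odd times and cannot be read off from the independent $Z_{2m}$'s alone. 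This last property is the source of the non-standardness; the strengthening over Tsirelson's original example is that $Z_{2n-2}$ is a genuine component of $Z_{2n-1}$ rather than merely a deterministic function of it.

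Items (1) through (3) of the theorem hold by construction. The identity $\fc^Z_{2n} = \sigma(Z_{2m} : m \le n)$ is immediate from $Z_{2k-1} \in \sigma(Z_{2k-2}, Z_{2k})$, so the even-time subfiltration is of product type. For $(r_n)$-adicity (item 4), I would read off at each brick two independent uniform innovations $U_{2n-1}$ and $U_{2n}$ of sizes $r_{2n-1}$ and $r_{2n} = |F_n|/r_{2n-1}$, obtained respectively from the image $g_n(Z_{2n-2}, Z_{2n})$ and from the position of $Z_{2n}$ inside the corresponding fibre of $g_n(Z_{2n-2}, \cdot)$; property (ii) above ensures that each $U_m$ is independent of $\fc^Z_{m-1}$.

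For the easy direction of the standardness characterisation, suppose $D \subset \zzf_-$ is infinite with $2k-1 \notin D$ for infinitely many $k$. The intuition is that each gap at $2k-1$ breaks the propagation of the hidden orientation across odd times, and the extracted filtration reduces to a ``free'' combination of independent innovations carried by the $Z_{2m}$'s. To make this precise, I would exhibit a jointly independent generating sequence for $(\fc^Z_n)_{n \in D}$, using the full $Z_{2k}$ at each gap index (independent of $\fc^Z_{2k-2}$) together with the remaining $(r_n)$-adic innovations inside the blocks of $D$ between gaps. A careful bookkeeping, relying on the infinitely many gaps to absorb the hidden orientation into the independent $Z_{2k}$'s, yields product-type structure and hence standardness of the extracted filtration.

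The main obstacle is the hard direction: if $D \supseteq \{2k-1 : k \le k_0\}$ for some $k_0 \le 0$, then $(\fc^Z_n)_{n \in D}$ must be shown non-standard. Since standardness is preserved by subsampling and is asymptotic, this reduces to proving non-standardness of $(\fc^Z_{2k-1})_{k \le 0}$. I would invoke the I-cosiness criterion of \'Emery and Schachermayer: exhibit a bounded $\fc^Z_{-1}$-measurable random variable $X$ encoding the hidden orientation built into the bricks, and show that for any coupling of the filtration with itself in which the two copies are independent deep in the past, the two versions of $X$ must disagree with probability bounded away from zero. Choosing the bricks concretely enough for this lower bound to follow from a transparent structural argument---without the fine combinatorial estimates needed in the split-words case---is where the bulk of the work lies. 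The ``in particular'' statement is then immediate: with $D = \{2n-1 : n \le 0\}$ the extracted filtration is $(\fc^Z_{2n-1})_{n \le 0}$ itself, non-standard by the above, while any strict subextraction $D' \subsetneq D$ with $D \setminus D'$ infinite has complement containing infinitely many indices $2k-1$ and is thus standard by the easy direction.
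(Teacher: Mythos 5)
Your plan is aligned with the paper at the highest level (a two-step Markov brick with $Z_{2n-2}=f(Z_{2n-1})$ and $Z_{2n-1}=g(Z_{2n-2},Z_{2n})$, I-cosiness to show the odd-time filtration is non-standard, an explicit product-type decomposition for the extracted filtrations that skip infinitely many odd indices). But the proposal leaves the central mechanism unspecified, and the one place where you do try to name it---``a cocycle-type hidden orientation that propagates across odd times''---is the wrong picture. There is no cocycle invariant in the paper's construction. The non-standardness is driven by a purely metric property of each brick: for any two distinct values $z_1',z_1''\in F_1$, the conditional laws $\lc(Z_2\mid Z_1=z_1')$ and $\lc(Z_2\mid Z_1=z_1'')$ have small overlap,
$$\sum_{z\in F_2}\min\bigl(\P[Z_2=z\mid Z_1=z_1'],\,\P[Z_2=z\mid Z_1=z_1'']\bigr)\le\alpha .$$
This is what forces any non-anticipative coupling with $Z'_{2n-1}\neq Z''_{2n-1}$ to keep $Z'_{2n}\neq Z''_{2n}$ with conditional probability at least $1-\alpha_n$; iterating this inequality along odd times, and invoking the summability hypothesis $\sum_n\alpha_n<\infty$ (which you never state but which is essential, as otherwise $\prod(1-\alpha_n)$ vanishes), gives $\bar\P[Z'_{-1}\neq Z''_{-1}]\ge\tfrac12\prod_k(1-\alpha_k)>0$ and so negates I-cosiness. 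Your description ``show that the two versions of $X$ disagree with probability bounded away from zero'' restates the goal but does not identify this quantitative overlap condition that makes the induction close.

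The second gap is that you defer the actual existence of bricks with all the required properties (the overlap bound, the exact $(r_1,r_2)$-adic structure, $|F_2|=r_1r_2$, and the constraint $r_2\ge 1/\alpha$ that the paper extracts in its ``Properties of bricks'' lemma) to ``where the bulk of the work lies.'' The paper reduces this to a clean combinatorial statement---a family of partitions of $F_2$, one per element of $F_0$, each with $r_1$ blocks of size $r_2$, any two blocks from different partitions having intersection $\le\alpha r_2$---and then produces such families explicitly (e.g.\ partitions of $K^8$ into parallel affine $4$-planes $\{y=Ax+b\}$ indexed by the $4\times4$ matrix $A$, or partitions of $K^2$ by graphs of degree-$4$ polynomials). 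Without this ``transversality'' reformulation, it is not clear your plan produces bricks at all, let alone ones whose cardinalities can be matched so that successive bricks glue. The easy direction and the final ``in particular'' step are sketched correctly.
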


In this theorem, the statement that $(\fc^Z_{2n-1})_{n \le 0}$ is
at the threshold of standardness cannot be deduced from
the standardness of $(\fc^Z_{2n})_{n \le 0}$ and the non-standardness
of $(\fc^Z_{2n-1})_{n \le 0}$ only. Indeed, the example of repeated
interlinking mentioned above (see example~\ref{repeated interlinking} 
in section~\ref{split-words})
provides a counterexample (modulo a time-translation). The proof that
$(\fc^Z_{2n-1})_{n \le 0}$ is at the threshold of standardness
actually uses the fact that $(Z_n)_{n \le 0}$ is an inhomogeneous
Markov process.
%

\section{The case of split-words filtrations}~\label{split-words}

In the whole section, excepted in subsection~\ref{interlinking}, 
$\fc = (\fc_n)_{n \le 0}$ denotes a split-words filtration
associated to a finite alphabet $A$
(endowed with the uniform measure) and a decreasing sequence
$(\ell_n)_{n \le 0}$ of positive integers (the lengths) such that
$\ell_0=1$ and the ratios $r_n = \ell_{n-1}/\ell_n$ are integers.

First, we prove the characterisation at the threshold of standardness
among the split-words filtrations stated in proposition
~\ref{characterisation of the threshold}.

\subsection{Proof of proposition~\ref{characterisation of the threshold}}

{\bf Preliminary observations:}
let $B$ be an infinite subset of $\Z^-$ such that $B^c$ is infinite.
Then the filtration $(\F_n)_{n \in B}$ is a split-words filtration with
lengths $(\ell_n)_{n \in B}$. The ratios between successive lengths are
the integers $(R_n)_{n\in B}$ given by
$$R_n = \ell_{m(n)}/\ell_n \text{ where } m(n) = \sup\{k<n~:~ k \in B\}.$$
Set
$B_1=B\cap (1+B)$ and $B_2=B\backslash(1+B).$
Then $B_2$ is infinite and
\begin{itemize}
\item for $n \in B_1$, $R_n=r_n$,
\item for $n \in B_2$, $R_n\ge r_n r_{n-1}$.
\end{itemize}

Furthermore, if $B^c$ does not contain two consecutive integers,
then for any $n \in B_2$, one has $n-2\in B$ since $n-1\notin B$,
thus $m(n)=n-2$ and $R_n= r_n r_{n-1}$.

{\bf Proof of the "if" part:} assume that
$$\sum_{n\le 0} \frac{\log(r_n)}{\ell_n}<+\infty
\text{ and } \inf_{n\le 0} \frac{\log(r_n r_{n-1})}{\ell_n} >0.$$
The first condition ($\Delta$) ensures that $\F$ is not standard.
Let $B$ be an infinite subset of $\Z^-$ such that $B^c$ is infinite. One has
$$\sum_{n\in B} \frac{\log(R_n)}{\ell_n}
\ge \sum_{n\in B_2} \frac{\log(R_n)}{\ell_n}
\ge \sum_{n\in B_2} \frac{\log(r_n r_{n-1})}{\ell_n} = +\infty,$$
since $B_2$ is infinite and
$\inf\{ (\log(r_n r_{n-1}))/\ell_n~;~n\le 0\} >0.$
Thus, the split-words filtration $(\fc_n)_{n \in B}$ is standard
since the sequence of lengths $(\ell_n)_{n \in B}$ fulfils condition
$\neg (\Delta)$. Therefore $\F$ is at the threshold of standardness.

{\bf Proof of the "only if" part:}
condition $(\Delta)$, which is equivalent to the non-standardness of $\F$,
is necessary for $\F$ to be at the threshold of standardness. Let us show
that if $(\Delta)$ and $\neg(\star)$ hold, \Bk then $\F$ is not at the
threshold of standardness. Since the reals $\log(r_n r_{n-1})/\ell_n$
are positive, condition $\neg (\star)$ induces the existence of a subsequence
$(\log(r_{\phi(n)} r_{\phi(n)-1})/\ell_{\phi(n)})_{n \le 0}$ such that
$$ \forall n \in \Z^-,\quad  \frac{\log(r_{\phi(n)} r_{\phi(n)-1})}{\ell_{\phi(n)}}
\le 2^n \quad \text{ and } \quad \phi(n-1)\le \phi(n)-2.$$

Set $B=(\phi(\Z^-)-1)^c$. Let us show that the filtration
$(\F_n)_{n\in B}$ is not standard. By construction, $\phi(\Z^-)$ is infinite
and does not contain two consecutive integers. Hence $B$ and $(B)^c$
are both infinite and $B_2=B\backslash (B+1)=\phi(\Z^-)$.
Moreover, according to the preliminary observations,
$R_n=r_n$ for every $n \in B_1$
and $R_n=r_n r_{n-1}$ for every $n \in B_2$ since $B^c$ does not
contain two consecutive integers. Thus
\begin{eqnarray*}
\sum_{n\in B} \frac{\log(R_n)}{\ell_n} &=& \sum_{n\in B_1} \frac{\log(r_n)}{\ell_n} +  \sum_{n\in \phi(\Z^-)} \frac{\log(r_n r_{n-1})}{\ell_n}\\
&\le& \sum_{n \le 0} \frac{\log(r_n)}{\ell_n} +  \sum_{m\le 0} 2^m\\
&<& +\infty.
\end{eqnarray*}
Therefore $(\F_n)_{n \in B}$ is not standard. 
Thus $\F$ is not at the threshold of standardness.

\subsection{Proof of proposition~\ref{not extracted from a filtration at the threshold} and example}

\begin{proof}
Assume that $(\neg\Delta)$ and $(\Box)$ hold and that
$\F$ is extracted from some split-words filtration $\H$
with lengths $(\ell'_n)_{n \le 0}$, namely $\F_n = \H_{\phi(n)}$ for every
$n \le 0$, for some increasing map $\phi$ from $\Z^-$ to $\Z^-$.
Then for every $n \le 0$, $\ell_n=\ell'_{\phi(n)}$ and
$r_n = r'_{\phi(n)} \cdots r'_{\phi(n-1)+1}$ where $r'_k = \ell'_{k-1}/\ell'_{k}$.
Let us show that $\H$ cannot be at the threshold of standardness.

Condition $(\neg\Delta)$ ensures that $\F$ is standard. 
If $\phi$ skips only finitely many integers, then $\H$ is standard
and the conclusion holds. Otherwise, $\phi(u_n-1)\le \phi(u_n)-2$
for infinitely many $n$, and for those $n$, 
$$\frac{\log(r'_{\phi(n)}r'_{\phi(n)-1})}{\ell'_{\phi(n)}}
\le \frac{\log(r'_{\phi(n)} \cdots r'_{\phi(n-1)+1})}{\ell'_{\phi(n)}}
= \frac{\log r_n}{\ell_n}.$$
Thus, $(\Box)$ implies that
$$\inf_{k\le 0} \frac{\log(r'_k r'_{k-1})}{\ell'_k}=0.$$
Since the sequence $(r'_n)_{n \le 0}$ does not fulfill condition $(\star)$,
$\H$ is not at the threshold of standardness.
\end{proof}

\begin{exam}~\label{no intermediate at the threshold}
Define the sequence of lengths $(\ell_n)_{n \le 0}$ by
$\ell_0=1$, $\ell_{-1}=2$ and, for every  $n\le-1$,
$$\ell_{n-1}=\ell_n 2^{\lfloor \ell_n/|n| \rfloor},$$
where $\lfloor x\rfloor$ denotes the integer part of $x$.

A recursion shows that for every $n \leq 0$, $\ell_n$ is a power of $2$,
and that $\ell_n \ge 2^{|n|} \ge |n|$, hence $r_n=\ell_{n-1}/\ell_n\ge 2$.
Moreover, for every $n \le -1$,
$$\frac{\log_2(r_n)}{\ell_n} =
\frac{{\lfloor \ell_n/|n| \rfloor}}{\ell_n} \in
\left[\frac{1}{2|n|},\frac{1}{|n|}\right].$$
Therefore, $(\neg \Delta)$ and $(\Box)$ hold, hence $\F$ is standard
but cannot be extracted from any split-words filtration at the threshold
of standardness.

Yet, since each $\ell_n$ is a power of $2$, $\F$ is extracted
from the dyadic split-words filtration $\H$, which is not standard.
Since every filtration extracted from $\H$ is a split-words filtration,
one can deduce that there is no intermediate filtration (for extraction)
between $\H$ and $\F$.
\end{exam}

Remark: there are trivial examples of standard split-words filtrations
which cannot be
extracted from any split-words filtration at the threshold of standardness.
For example, consider any split-words filtrations such that
$\neg(\Delta)$ holds and such that $r_n$ is a prime number for every
$n \le 0$. The last condition
prevents the filtration from being extracted from {\it any other}
split-words filtration. Yet, it still could be 
extracted from some filtration at the threshold of standardness
which is not a split-words filtration.

\subsection{Proof of theorem~\ref{various types}}

Replacing $\alpha_{n}$ by $\min(\max(\alpha_{n},1/|n+2|^2),1)$
for $n\le-3$ does not change the nature of the series
$\sum_{k \in B^c} \alpha_{k}$, hence we may assume that for $n\le-3$,
$$1/|n+2|^2 \le \alpha_{n} \le 1.$$
Set $\ell_0=1$, $\ell_{-1}=2$, $\ell_{-2}=8$, $\ell_{-3}=64$,
$\ell_{-4}=2^{11}=2048$ and $\ell_{n-2}=2^{\lfloor\alpha_{n-1}\ell_n\rfloor}$
for every $n\le -3$, where $\lfloor x\rfloor$ denotes the integer part of $x$.
We begin with two technical lemmas.

\begin{lemm}\label{minorations}
For every $n \le -1$, $\ell_n \ge |n|^3$ and $\ell_{n} \ge 2|n+1|^2 \ell_{n+1}$.
\end{lemm}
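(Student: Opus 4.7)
The plan is to prove both inequalities simultaneously by strong induction on $|n|$. The prescribed values are tuned so that the five base cases $n \in \{-1,-2,-3,-4,-5\}$ can be verified directly. For $n \in \{-1,-2,-3,-4\}$ this is an immediate computation from the explicit values (for instance, $\ell_{-3} = 64 = 2\cdot|{-2}|^2\cdot\ell_{-2}$, with equality). For $n = -5$ one uses the recurrence together with the hypothesis $\alpha_{-4}\ge 1/4$: this yields $\ell_{-5}\ge 2^{\lfloor 64\alpha_{-4}\rfloor}\ge 2^{16}=65536$, which matches both $|{-5}|^3=125$ and $2\cdot 4^2\cdot\ell_{-4}=2^{16}$ (the second with equality).

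For the inductive step with $n\le -6$, the strategy is a three-step cascade. First, the recurrence $\ell_n = 2^{\lfloor \alpha_{n+1}\ell_{n+2}\rfloor}$ combined with $\alpha_{n+1}\ge 1/|n+3|^2$ gives
$$\log_2\ell_n \ge \frac{\ell_{n+2}}{|n+3|^2}-1.$$
Second, the inductive hypothesis (the second inequality) at index $n+2$ reads $\ell_{n+2}\ge 2|n+3|^2\ell_{n+3}$, hence $\log_2\ell_n\ge 2\ell_{n+3}-1$. Third, the inductive hypothesis (the first inequality) at index $n+3$ gives $\ell_{n+3}\ge|n+3|^3$, and the whole chain collapses to $\log_2\ell_n\ge 2|n+3|^3-1$. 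This quantity comfortably dominates $3\log_2|n|$, which proves the first inequality; it also dominates $1+2\log_2|n+1|+\ell_{n+3}$, which proves the second inequality once one observes that $\log_2\ell_{n+1}\le\ell_{n+3}$ (for $n\le -6$ one has $n+1\le -5$, so $\ell_{n+1}=2^{\lfloor\alpha_{n+2}\ell_{n+3}\rfloor}$ with $\alpha_{n+2}\le 1$).

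I do not foresee any serious obstacle: the argument is elementary bookkeeping, and the constants $\ell_0,\ldots,\ell_{-4}$ have evidently been chosen so that the base cases just barely succeed. The only real subtlety is that the bound $\log_2\ell_{n+1}\le\ell_{n+3}$ used in the cascade requires $n+1\le -5$, which is precisely why $n=-5$ has to be treated by hand: at that index $\ell_{n+1}=\ell_{-4}=2^{11}$ is a prescribed constant rather than coming from the recurrence, and the cascade would lose a factor of $2$ it cannot afford.
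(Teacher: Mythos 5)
Your proof is correct and follows essentially the same strategy as the paper: strong induction driven by the recurrence $\ell_n = 2^{\lfloor\alpha_{n+1}\ell_{n+2}\rfloor}$ and the two-sided hypotheses $1/|n+2|^2 \le \alpha_n \le 1$. The only real difference is packaging: the paper estimates the ratio $\ell_{n-2}/\ell_{n-1}$ directly, whereas you bound $\log_2\ell_n$ and then recover the second inequality via the auxiliary estimate $\log_2\ell_{n+1}\le\ell_{n+3}$; this costs you the extra base case $n=-5$.

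That extra base case is actually a welcome correction. The paper fixes ``some $n\le -3$'' and uses the equality $\log_2\ell_{n-1}=\lfloor\alpha_n\ell_{n+1}\rfloor$, but at $n=-3$ this identity fails: $\ell_{-4}=2^{11}$ is one of the prescribed seeds, not produced by the recurrence, and $\lfloor\alpha_{-3}\ell_{-2}\rfloor\le 8<11$. With $\alpha_{-4}=1/4$ the paper's intermediate bound $\ell_{-5}/\ell_{-4}\ge 2^{|{-2}|^3-1}=128$ even fails numerically ($\ell_{-5}/\ell_{-4}=2^5=32$), although the conclusion of the lemma at $n=-5$ is still true. Your decision to verify $n=-5$ directly from $\alpha_{-4}\ge 1/4$ and to start the inductive step only at $n\le -6$, where all the recurrence identities you invoke genuinely hold, quietly closes this small gap. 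The remaining bookkeeping is sound: the cubic-versus-logarithm comparisons you use all hold at $|n|=6$ and a fortiori beyond, and your diagnosis of why $n=-5$ cannot be absorbed into the inductive step (the bound $\log_2\ell_{n+1}\le\ell_{n+3}$ needs $n+1\le -5$) is exactly right.
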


\begin{proof}[Proof of lemma~\ref{minorations}]
The proof of lemma~\ref{minorations} is done by induction. One checks that
the above inequalities hold for $-4 \le n \le -1$.

Fix some $n \le -3$. Assume that the inequalities hold for $n+1$, $n$ and
$n-1$. Then
\begin{eqnarray*}
\log_2\ell_{n-2} - \log_2\ell_{n-1}
&=& \lfloor\alpha_{n-1}\ell_{n}\rfloor - \lfloor\alpha_n\ell_{n+1}\rfloor\\
&\ge& \alpha_{n-1}\ell_{n}-1-\alpha_n\ell_{n+1}\\
&\ge& \frac{\ell_{n}}{|n+1|^2}-\ell_{n+1}-1\\
&\ge& \ell_{n+1}-1 \text{ (since } \ell_{n} \ge 2|n+1|^2 \ell_{n+1})\\
&\ge& |n+1|^3-1 \text{ (since } \ell_{n+1} \ge |n+1|^3),
\end{eqnarray*}
hence
$$\ell_{n-2}/\ell_{n-1} \ge 2^{|n+1|^3-1} \ge 2|n-1|^2 \text{ (since }n \le -3).$$
Since $\ell_{n-1} \ge |n-1|^3$, one has
$$\ell_{n-2} \ge 2|n-1|^2\ell_{n-1} \ge 2|n-1|^5 \ge |n-2|^3
\text{ (since }n \le -3).$$
Thus the inequalities hold for $n-2$. The proof is complete.
\end{proof}

\begin{lemm}\label{estimates}
For every $n \le -4$,
\begin{eqnarray*}
\frac{\log_2 \ell_{n-1}}{\ell_n} &\le& \frac{1}{2|n+1|^2},\\
\frac{\alpha_{n-1}}{2} \le \frac{\log_2 \ell_{n-2}}{\ell_n}
&\le& \alpha_{n-1},\\
\frac{\log_2 \ell_{n-3}}{\ell_n} &\ge& 1.
\end{eqnarray*}
\end{lemm}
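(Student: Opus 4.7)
The plan is to unfold the defining recursion $\ell_{m-2}=2^{\lfloor\alpha_{m-1}\ell_m\rfloor}$ (valid for $m\le-3$) at the three shifts $m=n+1$, $m=n$, and $m=n-1$, each of which is legitimate when $n\le-4$. This gives the exact identities
$$\log_2\ell_{n-1}=\lfloor\alpha_n\ell_{n+1}\rfloor,\qquad \log_2\ell_{n-2}=\lfloor\alpha_{n-1}\ell_n\rfloor,\qquad \log_2\ell_{n-3}=\lfloor\alpha_{n-2}\ell_{n-1}\rfloor.$$
From these, each of the three claimed estimates should drop out after applying the crude floor bounds $x-1\le\lfloor x\rfloor\le x$, the standing assumptions $1/|k+1|^2\le\alpha_k\le 1$ for $k\le-3$, and the inequalities of lemma~\ref{minorations}.

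For the first inequality, I would bound $\log_2\ell_{n-1}\le\alpha_n\ell_{n+1}\le\ell_{n+1}$ (using $\alpha_n\le 1$) and then divide by $\ell_n$, invoking $\ell_n\ge 2|n+1|^2\ell_{n+1}$ to get the stated $1/(2|n+1|^2)$. For the middle double inequality, the upper bound is immediate from $\lfloor x\rfloor\le x$; for the lower bound I would use $\lfloor\alpha_{n-1}\ell_n\rfloor\ge\alpha_{n-1}\ell_n-1\ge\alpha_{n-1}\ell_n/2$, which is valid as soon as $\alpha_{n-1}\ell_n\ge 2$. This last condition follows from $\alpha_{n-1}\ge 1/|n+1|^2$ and $\ell_n\ge|n|^3$, once one verifies the elementary arithmetic inequality $|n|^3\ge 2|n+1|^2$ on the range $n\le-4$.

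For the third inequality, I would write $\log_2\ell_{n-3}\ge\alpha_{n-2}\ell_{n-1}-1$, then combine $\alpha_{n-2}\ge 1/|n|^2$ with the estimate $\ell_{n-1}\ge 2|n|^2\ell_n$ from lemma~\ref{minorations} to conclude $\alpha_{n-2}\ell_{n-1}\ge 2\ell_n$; dividing by $\ell_n$ yields $\log_2\ell_{n-3}/\ell_n\ge 2-1/\ell_n\ge 1$ since $\ell_n\ge 1$. There is no genuine obstacle here: the proof is entirely a matter of substituting the right recursion formula at the right index shift and plugging in the two-sided bounds on $\alpha_k$ together with the lower bounds on $\ell_k/\ell_{k+1}$ supplied by the preceding lemma. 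The only point needing a bit of care is the bookkeeping on which index shift is allowed for which value of $n$, and the small numerical check that $|n|^3\ge 2|n+1|^2$ already holds at $|n|=4$.
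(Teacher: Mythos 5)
Your proposal is correct and follows essentially the same route as the paper: unfold the recursion at the shifts $m=n+1,n,n-1$, apply the floor bounds together with $1/|k+2|^2\le\alpha_k\le 1$ and the inequalities of Lemma~\ref{minorations}. The only cosmetic difference is that the paper first establishes the middle estimate and then re-applies it at shifts $n+1$ and $n-1$ (using $\lfloor x\rfloor\ge x/2$ for $x\ge 1$), whereas you argue directly from the floor identities via $\lfloor x\rfloor\ge x-1\ge x/2$ for $x\ge 2$; both versions go through.
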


\begin{proof}[Proof of lemma~\ref{estimates}]
Fix $n \le -4$. The assumptions made on the sequence $(\alpha_k)_{k \le 0}$,
and lemma~\ref{minorations} entail
$\ell_n\alpha_{n-1}\ge |n|^3/|n+1|^2\ge 1,$ thus
$\alpha_{n-1}\ell_n/2 \le \lfloor\alpha_{n-1}\ell_{n}\rfloor
\le \alpha_{n-1}\ell_n$. Thus, the recursion formula
$\ell_{n-2} = 2^{\lfloor\alpha_{n-1}\ell_n\rfloor}$ yields
$$\frac{\alpha_{n-1}}{2}\le \frac{\log_2 \ell_{n-2}}{\ell_n} \le \alpha_{n-1}.$$
Since $n \le -4$, the same inequalities hold for $n+1$ and $n-1$, hence
by lemma~\ref{minorations}
$$\frac{\log_2 \ell_{n-1}}{\ell_n}
\le \alpha_n \frac{\ell_{n+1}}{\ell_{n}} \le
\frac{\ell_{n+1}}{\ell_{n}} \le \frac{1}{2|n+1|^2},$$
and
$$\frac{\log_2 \ell_{n-3}}{\ell_n} \ge \frac{\alpha_{n-2}}{2}
\frac{\ell_{n-1}}{\ell_{n}} \ge \frac{1}{2|n|^2}2|n|^2 = 1.$$
The proof is complete.
\end{proof}

We now prove theorem~\ref{various types}.

Let us check that the split-words filtration associated to the
to the lengths $(\ell_n)_{n \le 0}$ fulfills the properties of the 
previous proposition.

Let $B$ be an infinite subset of $\Z^-$ such that $B^c$ is infinite.
Since replacing $B$ by $B \setminus \{-2,-1,0\}$ does not change the nature
of the filtration $(\F_n)_{n\in B}$, one may assume that
$B \subset ]-\infty,-3]$.

Set $m(n) = \sup\{k<n~:~ k \in B\}$ for every $n \le 0$.
Then $(\ell_{m(n)}/\ell_n)_{n \in B}$ is the sequence of ratios
associated to the lengths $(\ell_n)_{n \in B}$.
Since $(\Delta)$ characterises standardness of split-words filtrations,
$$(\F_n)_{n \in B} \text{ is standard } \Longleftrightarrow
\sum_{n \in B} \frac{\log_2 (\ell_{m(n)}/\ell_n)}{\ell_n} = +\infty
\Longleftrightarrow\sum_{n \in B} \frac{\log_2 \ell_{m(n)}}{\ell_n} = +\infty,$$
where the last equivalence follows from the convergence of
the series $\sum_n \log_2\ell_n/\ell_n$ since
$\ell_n \ge 2^{|n|}$ for every $n \le 0$.

Let us split $B$ into three subsets:
\begin{itemize}
\item $B_1=\{  n\in B : m(n)=n-1\}$,
\item $B_2=\{  n\in B : m(n)=n-2\}$,
\item $B_3=\{  n\in B : m(n)\le n-3\}$.
\end{itemize}
Then
$$\sum_{n \in B} \frac{\log_2 \ell_{m(n)}}{\ell_n}
= \sum_{n \in B_1} \frac{\log_2 \ell_{n-1}}{\ell_n}
+ \sum_{n \in B_2} \frac{\log_2 \ell_{n-2}}{\ell_n} +
\sum_{n \in B_3} \frac{\log_2 \ell_{m(n)}}{\ell_n}.$$
The inequality $\ell_{m(n)} \ge \ell_{n-3}$ for $n \in B_3$ and
lemma~\ref{estimates} show that in the right-hand side,
\begin{itemize}
\item the first sum (over $B_1$) is always finite;
\item the middle sum (over $B_2$) has the same nature as
$\sum_{n \in B_2} \alpha_n$;
\item the last sum (over $B_3$) is finite if and only if $B_3$ is finite.
\end{itemize}

When $B_3$ is finite, any pair of consecutive integers excepted a finite
number of them contain at least one element of $B$. Hence, $(B_2 -1)$ only
differs from $B^c$ by a finite set of integers. Thus the sum
$\sum_{n \in B_2} \alpha_n$ has the same nature as $\sum_{n \in B^c} \alpha_n$.
Theorem~\ref{various types} follows.

\subsection{Some applications of theorem~\ref{various types}}

Choosing particular sequences $(\alpha_n)_{n \le 0}$ in
theorem~\ref{various types}
provides interesting examples of non-standard filtrations.
In what follows, $\F$ denotes
the filtration associated the sequence
$(\alpha_n)_{n \le 0}$ given by theorem~\ref{various types}.

\begin{exam}
If $\alpha_n=1$ for every $n$, then $\F$ is at the threshold
of standardness.
\end{exam}

\begin{exam}
If $\alpha_{n}=0$ for every even $n$ and $\alpha_{n}=1$ for every odd $n$,
then $(\F_{2n})_{n \le 0}$ is standard whereas $(\F_{2n-1})_{n \le 0}$ is not.
\end{exam}

\begin{exam}
If the series $\sum \alpha_n$ converges, then for every infinite subset
$B$ of $\Z^-$, the extracted filtration $(\F_n)_{n\in B}$ is standard
if and only if $(B \cup (B-1))^c$ is infinite. In particular, the
filtrations $(\F_{2n})_{n \le 0}$ and  $(\F_{2n-1})_{n \le 0}$ are at the
threshold of standardness.
\end{exam}

\begin{exam}~\label{no extracted filtration at the threshold}
If $\alpha_n\sim 1/|n|$ as $n$ goes to $-\infty$, then $\F$ is not standard
and no filtration at the threshold of standardness can be
extracted from $\F$.
\end{exam}

\begin{proof}[Proof of example~\ref{no extracted filtration at the threshold}]
The non-standardness of $\F$ is immediate by theorem~\ref{various types}.

Call $\mu$ the non-finite positive measure on $\Z^-$ defined by
$$\mu(B) = \sum_{n \in B} \alpha_n \text{ for } B \subset \Z^-.$$

Let $(\F_n)_{n \in B}$ be any non-standard filtration extracted from $\F$.
We show that $(\F_n)_{n \in B}$ cannot be at the threshold of standardness
by constructing a subset $B'$ of $B$ such that
and $(\F_n)_{n \in B'}$ is not standard although $B \setminus B'$ is infinite .

By to theorem~\ref{various types}, we know that $\mu(B^c) < +\infty$ and
$$n \notin B \text{ and } n+1 \notin B
\text{ only for finitely many } n \in \Z^-.$$
Since $\mu(B^c)$ is finite, the elements of $B^c$ get rarer and rarer as
$n \to -\infty$.
In particular, the set $A = (B-1) \cap B \cap (B+1)$ is infinite.

We get $B'$ from $B$ by removing a ``small'' infinite subset of $A$.
Namely, we set $B'= B \backslash A'$ where $A'$ is an infinite
subset of $A$ which does
not contain two consecutive integers and chosen such that $\mu(A') < +\infty$.
By construction, $B \backslash B' = A'$ is infinite and
$\mu((B')^c) < +\infty$ since $(B')^c = B^c \cup A'$. Thus $B'$ is
an infinite subset of $B$.

Using the definition of $A$ and the fact that $A'$ does not contain
two consecutive integers and by construction of $A$, one checks that
$(B'\cup(B'-1))=(B\cup(B-1))$, therefore $(B'\cup(B'-1))^c$ is infinite.

Thus $(\F_n)_{n \in B'}$ is not standard, which shows that
$(\F_n)_{n \in B}$ is not at the threshold of standardness.
\end{proof}


\subsection{Interlinking standardness and non standardness}
~\label{interlinking}


Given any filtration $(\fc_n)_{n \le 0}$, a simple way to get a ``slowed''
filtration is to repeat each $\fc_n$ some finite number of times, which
may depend of $n$. We now show that this procedure does not change the
nature of the filtration.

\begin{lemm}\label{slowed filtrations}
Let $(\F_n)_{n \le 0}$ be any filtration and $\phi$ an increasing map
from $\Z^-$ to $\Z^-$ such that $\phi(0)=0$. For every $n \le 0$,
set $\G_n=\F_k$ if $\phi(k-1)+1 \le n \le \phi(k)$. Then:
\begin{itemize}
\item $(\G_n)_{n \le 0}$ is a filtration,
\item $(\F_n)_{n \le 0}$ is extracted from $(\G_n)_{n \le 0}$,
\item $(\G_n)_{n \le 0}$ is standard if and only if $(\F_n)_{n \le 0}$ is standard.
\end{itemize}
\end{lemm}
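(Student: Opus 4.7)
The first two bullets are routine and one direction of the third is immediate; the real content is the implication ``$\F$ standard $\Rightarrow$ $\G$ standard''.

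Since $\phi\colon \zzf_- \to \zzf_-$ is strictly increasing with $\phi(0)=0$, one has $\phi(k)\to -\infty$ as $k\to -\infty$, and the integer intervals $\odc\phi(k-1)+1,\phi(k)\fdc$, $k\le 0$, partition $\zzf_-$. Writing $k(n)$ for the unique $k$ with $n\in\odc\phi(k-1)+1,\phi(k)\fdc$, the map $n\mapsto k(n)$ is non-decreasing, hence $\G_n = \F_{k(n)}$ is a non-decreasing sequence of $\sigma$-fields, which yields the first bullet. Since $k(\phi(k))=k$, one has $\G_{\phi(k)}=\F_k$, so $\F$ is extracted from $\G$ via $\phi$ (second bullet); in particular, if $\G$ is standard then so is $\F$, because extractions of standard filtrations are standard.

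For the converse, I would invoke the classical characterisation (to be recalled in the annex) that a filtration is standard if and only if, possibly after enlarging the probability space, it is immersed in a product-type filtration. Assuming $\F$ standard, pick such an enlargement in which $\F$ is immersed in $\F'=(\F'_n)_{n\le 0}$ with $\F'_n=\sigma(V_m,\ m\le n)$ for some independent sequence $(V_m)_{m\le 0}$, and define $\G'_n=\F'_{k(n)}$ by applying the same slowing to $\F'$. Then two facts must be checked. First, $\G'$ is itself of product type: between times $n-1$ and $n$, either $k(n)=k(n-1)$ and nothing is added, or $k(n)=k(n-1)+1$ and $\G'_n = \G'_{n-1} \vee \sigma(V_{k(n)})$ with $V_{k(n)}$ independent of $\G'_{n-1}$, so padding the ``empty'' steps by deterministic variables exhibits $\G'$ as the natural filtration of an independent sequence. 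Second, $\G$ is immersed in $\G'$: since $\G_0=\F_0$, for every bounded $\F_0$-measurable $Y$ and every $n\le 0$, the immersion of $\F$ in $\F'$ at time $k(n)$ gives $\mathbb E[Y\mid\G'_n]=\mathbb E[Y\mid\F'_{k(n)}]=\mathbb E[Y\mid\F_{k(n)}]=\mathbb E[Y\mid\G_n]$. Hence $\G$ is an immersed subfiltration of a product-type filtration, and is therefore standard.

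The main obstacle is conceptual rather than computational: one must invoke the right characterisation of standardness (immersion in a product-type filtration) and, if that requires enlarging the probability space, transfer the slowing construction through the enlargement. Once this is accepted, slowing preserves both the product-type structure (the new steps added to $\G'$ either add nothing or add a single $V_k$ independent of the past) and the immersion (the relevant conditional expectations match verbatim), so no technical estimates are required.
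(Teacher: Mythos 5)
Your proposal is correct and takes essentially the same route as the paper: both pass to an enlargement where $\F$ is immersed in a product-type filtration, apply the same slowing to the ambient filtration, and observe that the slowed ambient filtration is still product-type and still dominates $\G$ by immersion. The only cosmetic difference is in how immersion of $\G$ in $\G'$ is verified: the paper checks directly, in each of the two cases $n<\phi(k)$ and $n=\phi(k)$, that $\G_{n+1}$ and $\K_n$ are conditionally independent given $\G_n$, while you use the equivalent martingale characterization $\mathbb{E}[Y\mid\G'_n]=\mathbb{E}[Y\mid\G_n]$ for $\F_0$-measurable $Y$, which the annex explicitly records as equivalent. Both verifications are one-liners and the substance is identical.
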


\begin{proof}[Proof of lemma~\ref{slowed filtrations}]
By construction, $\G_{\phi(k)} = \F_k$ for every $k \le 0$ and the
sequence $(\G_n)_{n \le 0}$ is constant on every interval
$\odc \phi(k-1)+1,\phi(k) \fdc$. The first two points follow.

The ``only if'' part of the third point is immediate since $\F$ is extracted
from $\G$.

Assume that $\F$ is standard. Then, up to a enlargement of the probability
space, one may assume that $\F$ is immersed in some product-type filtration
$\H$. Define a slowed filtration by $\K_n=\H_k$ if
$\phi(k-1)+1 \le n \le \phi(k)$. Then $\K$ is still a product-type filtration
To prove that $\G$ is immersed in $\K$, we have to check that for every
$n \le -1$, $\G_{n+1}$ and $\K_n$ are independent conditionally on $\G_n$.
This holds in any case since:
\begin{itemize}
\item when $\phi(k-1)+1 \le n \le \phi(k)-1$, $\G_{n+1}=\F_k$, $\K_n=H_k$
and $\G_n=\F_k$;
\item when $n=\phi(k)$, $\G_{n+1}=\F_{k+1}$, $\K_n=H_k$ and $\G_n=\F_k$.
\end{itemize}
Hence $\G$ is standard.
\end{proof}

\begin{exam}~\label{repeated interlinking}
Assume that $(\F_n)_{n \le 0}$ is at the threshold of standardness. Set
$\phi(0)=0$, $\phi(-1)=-1$ and, for every $k \le 0$,
$\phi(2k) = -2^{|k|}$ and $\phi(2k-1) = -2^{|k|}-1$.
Let $\G$ be the slowed filtration obtained from $\F$ as above.
Then for any $d \ge 1$, the filtration $(\G_{2^dn})_{n \le 0}$ is not standard,
whereas the filtration $(\G_{2^dn-2^{d-1}})_{n \le 0}$ is standard.
\end{exam}

\begin{proof}[Proof of example~\ref{repeated interlinking}]
Fix $d \ge 1$. The filtrations $(\G_{2^dn})_{n \le -2}$ and
$(\G_{2^dn-2^{d-1}})_{n \le -1}$ can be obtained from $(\F_{n})_{n \le -2d-2}$
and $(\F_{2n-1})_{n \le -d}$ by time-translations and by the slowing
procedure just introduced. And truncations, time-translations and slowing
procedure preserve the nature of the filtrations.
\end{proof}

\section{Improving on an example of Tsirelson}~\label{Tsirelson} 

In some non-published notes, Tsirelson gives a method to construct
an inhomogeneous Markov process $(X_n)_{n \le 0}$ such that the natural
filtration $(\fc^X_n)_{n \le 0}$ is easily proved to be non-standard,
although the tail $\sigma$-field $\F^X_{-\infty}$ is trivial and
the random variables $(X_{2n})_{n \le 0}$ are independent.

In Tsirelson's construction, each triple $(X_{2n-2},X_{2n-1},X_{2n})$ has
a particular structure  that we will explain soon. since the sequence
$(X_n)_{n \le 0}$ is obtained by
gluing the triples $(X_{2n-2},X_{2n-1},X_{2n})$ in a Markovian way, we
call {\it Tsirelson's bricks} these triples.

\subsection{The basic Tsirelson's brick}

Informally, the basic brick in Tsirelson's construction is a triple
of uniform random variables $X_{0},X_{1},X_{2}$ with values in some
finite sets $F_{0},F_{1},F_{2}$ such that for some $\alpha \in [0,1[$,
\begin{itemize}
\item the set $F_{2}$ is arbitrarily large, and the set $F_{0}$
is much larger;
\item the triple $(X_{0},X_{1},X_{2})$ is Markov;
\item the random variables $X_{0}$ and $X_{2}$ are independent;
\item any two different values of $X_{0}$ lead
to different values of $X_{2}$ with probability $\ge 1-\alpha$.
\end{itemize}
We now explain what the last requirement means.

Fix two distinct values
in $F_{0}$, namely $x'_0$ and $x''_0$. Choose randomly but not necessarily
independently $x'_1$ and $x''_1$ in $F_{1}$ according to the laws
$\lc(X_1|X_0=x'_0)$ and $\lc(X_1|X_0=x''_0)$. Then choose randomly but
not necessarily independently $x'_2$ and $x''_2$ in $F_{2}$ according to
the laws $\lc(X_2|X_1=x'_1)$ and $\lc(X_2|X_1=x''_1)$. Then
the values $x'_2$ and $x''_2$ must be
different with probability $\ge 1-\alpha$, whatever was the strategy
used to make the different choices.

More precisely, note $\rho_2$ the discrete metric on $F_2$: for all $x'_2$
and $x''_2$ in $F_2$,
\begin{eqnarray*}
\rho_2(x'_2,x''_2)=1 & \text{ if } x'_2 \ne x''_2,\\
\rho_2(x'_2,x''_2)=0 & \text{ if } x'_2 = x''_2.
\end{eqnarray*}
For all $x'_1$ and $x''_1$ in $F_1$, note $\rho_1(x'_1,x''_1)$
the Kantorovitch-Rubinstein
distance between the laws $\lc(X_2|X_1=x'_1)$ and $\lc(X_2|X_1=x''_1)$.
By definition,
$$\rho_1(x'_1,x''_1) = \inf\{\eef[\rho_2(X'_2,X''_2)]\ ; X'_2 \leadsto
\lc(X_2|X_1=x'_1), X''_2 \leadsto \lc(X_2|X_1=x''_1) \}.$$
Since $\rho_2$ is the discrete metric on $F_2$, $\rho_1(x'_1,x''_1)$
is actually the total variation distance between $\lc(X_2|X_1=x'_1)$ and
$\lc(X_2|X_1=x''_1)$.

By the same way, for all $x'_0$ and $x''_0$ in $F_0$, denote by
$\rho_0(x'_0,x''_0)$ the Kantorovitch-Rubinstein
distance between the laws $\lc(X_1|X_0=x'_0)$ and $\lc(X_1|X_0=x''_0)$.
The last requirement means that $\rho_0(x'_0,x''_0) \ge 1-\alpha$
when $x'_0 \ne x''_0$. This condition is used by Tsirelson to negate
Vershik's criterion.

Here is another formulation, which is closer to the
I-cosiness criterion recalled in section~\ref{annex}: for any non-anticipative
coupling of two copies $(X'_{0},X'_{1},X'_{2})$ and
$(X''_{0},X''_{1},X''_{2})$ of $(X_{0},X_{1},X_{2})$,
defined on some probability space $(\bar{\Omega},\bar{\ac},\bar{\P})$,
$$\bar{\P}[X'_{2} \ne X''_{2}|\sigma(X'_{0},X''_{0})] \ge 1-\alpha
\text{ on the event } [X'_{0} \ne X''_{0}].$$
Here, the
expression ``non-anticipative'' means that the filtrations generated by
the processes $X'$ and by $X''$ are immersed in the natural filtration
of $(X',X'')$. In particular, $X'_{1}$ and $X''_{0}$ are independent
conditionally on $X'_{0}$ (the couple $(X'_{0},X''_{0})$ gives no more
information on $X'_{1}$ than $X'_{0}$ does). Similarly, $X'_{2}$ and
$(X''_{0},X''_{1})$ are independent conditionally on $(X'_{0},X'_{1})$.
And the same holds when the roles of $X'$ and $X''$ are exchanged.

Let us give a formal definition.

\begin{defi}
Fix $\alpha \in ]0,1[$. Let $F_{0},F_{1},F_{2}$ be finite sets.
We will say that a triple $(Z_{0},Z_{1},Z_{2})$ of uniform random variables
with values in $F_{0},F_{1},F_{2}$ is a Tsirelson's $\alpha$-brick if
\begin{itemize}
\item the triple $(Z_{0},Z_1,Z_{2})$ is Markov.
\item $Z_{0}$ and $Z_{2}$ are independent.
\item for any non-anticipative
coupling of two copies $(X'_{0},X'_{1},X'_{2})$ and
$(X''_{0},X''_{1},X''_{2})$ of $(X_{0},X_{1},X_{2})$,
defined on some probability space $(\bar{\Omega},\bar{\ac},\bar{\P})$,
$$\bar{\P}[X'_{2} \ne X''_{2}|\sigma(X'_{0},X''_{0})] \ge 1-\alpha
\text{ on the event } [X'_{0} \ne X''_{0}].$$
\end{itemize}
\end{defi}

\subsection{Tsirelson's example of a brick}

Tsirelson gives an example of such a brick which is enlightening.

Let $p$ be a prime number, and $\zzf_p$ be the finite field with $p$ elements.
Note $F_{0}$ the set of all two-dimensional linear subspaces of
$(\zzf_p)^5$, $F_{1}$ the set of all one-dimensional affine subspaces of
$(\zzf_p)^5$ and $F_{2} = (\zzf_p)^5$. Then the size of $F_2$ is $|F_{2}| = p^5$ 
whereas
$$|F_{0}| = \frac{(p^5-1)(p^5-p)}{(p^2-1)(p^2-p)}
= (p^4+p^3+p^2+p+1)(p^2+1).$$
Indeed, the number of couples of independent vectors in $(\zzf_p)^5$ is
$(p^5-1)(p^5-p)$, but any linear plane in $(\zzf_p)^5$ can be generated by
$(p^2-1)(p^2-p)$ of these couples.

Tsirelson constructs a Markovian triple $(X_{0},X_{1},X_{2})$ as follows:
\begin{itemize}
\item choose uniformly $X_{0}$ in $F_{0}$ ;
\item given $X_{0}$, choose uniformly $X_{1}$ among the affine lines
whose direction are included in the linear plane $X_{0}$ ;
\item given $X_{0}$ and $X_{1}$, choose uniformly $X_{2}$ on the affine
line $X_{1}$.
\end{itemize}
One can check that $X_{2}$ is uniform on $F_{2}$, and independent of $X_{0}$.

Now, let $(X'_{0},X'_{1},X'_{2})$ and $(X''_{0},X''_{1},X''_{2})$
be any non-anticipative coupling of two copies of $(X_{0},X_{1},X_{2})$,
defined on some probability space $(\bar{\Omega},\bar{\ac},\bar{\P})$.
Then, conditionally on $(X'_{0},X''_{0},X'_{1},X''_{1})$,
the law of $X'_{2}$ is uniform on the line $X'_{1}$ and
the law of $X''_{2}$ is uniform on the $X''_{1}$. Since two distinct lines
have at most one common point, one has
$$\bar{\P}[X'_2 = X''_2|\sigma(X'_{0},X''_{0},X'_{1},X''_{1})] \le
\one_{[X'_{1} = X''_{1}]} + \frac{1}{p} \one_{[X'_{1} \ne X''_{1}]},$$
hence
$$\bar{\P}[X'_2 \ne X''_2|\sigma(X'_{0},X''_{0},X'_{1},X''_{1})] \ge
\frac{p-1}{p} \one_{[X'_{1} \ne X''_{1}]}.$$
Similarly, conditionally on $(X'_{0},X''_{0})$,
the law of $X'_{1}$ is uniform on the set of
all affine lines which are parallel to $X'_{0}$ and
the law of $X''_{1}$ is uniform on the set of
all affine lines which are parallel to $X''_{0}$.
But the affine lines $X'_{1}$ and $X''_{1}$ must have the same direction
to be equal. Since each linear plane in $(\zzf_p)^5$ contains $p+1$
linear lines whereas two distinct planes contain at most one common line,
$$\bar{\P}[X'_{1} = X''_{1}|\sigma(X'_{0},X''_{0})] \le
\one_{[X'_{0} = X''_{0}]} + \frac{1}{p+1} \one_{[X'_{0} \ne X''_{0}]},$$
hence
$$\bar{\P}[X'_{1} \ne X''_{1}|\sigma(X'_{0},X''_{0})] \ge
\frac{p}{p+1} \one_{[X'_{0} \ne X''_{0}]}.$$
Putting things together, one gets
\begin{eqnarray*}
\bar{\P}[X'_2 \ne X''_2|\sigma(X'_{0},X''_{0})]
&\ge& \frac{p-1}{p} \P[X'_{1} \ne X''_{1}|\sigma(X'_{0},X''_{0})] \\
&\ge& \frac{p-1}{p+1} \one_{[X'_{0} \ne X''_{0}]}.
\end{eqnarray*}
Hence, $(X_{0},X_{1},X_{2})$ is a Tsirelson's $\alpha$-brick with
$\alpha = 2/(p+1)$.

\subsection{Assembling bricks together}

The next step is to construct a non-homogeneous Markov process
$(X_n)_{n \le 0}$ such that for each $n \le 0$, the subprocess
$(X_{2n-2},X_{2n-1},X_{2n})$ is an Tsirelson's $\alpha_n$-brick,
where the $]0,1[$-valued sequence $(\alpha_n)_{n \le 0}$ fulfills
$$\sum_{n \le 0} \alpha_n < +\infty.$$
The next theorem achieves Tsirelson's construction. 

\begin{theo}~\label{Tsirelson's construction}
Let $(X_n)_{n \le 0}$ be a sequence of uniform random variables
with values in finite sets $(F_n)_{n \le 0}$ and $(\alpha_n)_{n \le 0}$
be an $]0,1[$-valued sequence such that the series $\sum_n \alpha_n$
converges.  Assume that
\begin{itemize}
\item the sets $F_{2n}$ are not singles,
\item $(X_n)_{n \le 0}$ is a non-homogeneous Markov process,
\item for each $n \le 0$, the subprocess
$(X_{2n-2},X_{2n-1},X_{2n})$ is a Tsirelson's $\alpha_n$-brick.
\end{itemize}
Then the natural filtration $\F^X$ is not standard.
Moreover, if the tail $\sigma$-field $\fc^X_{-\infty}$ is trivial,
then $|F_{2n}| \to +\infty$ as $n \to -\infty$.
\end{theo}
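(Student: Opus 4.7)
The plan is to handle the two claims separately, both via the brick $\alpha_n$-condition combined with the Markov property of $(X_n)_{n \le 0}$, applied to appropriate non-anticipative couplings of the process with itself.

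For the non-standardness of $\fc^X$, I would argue by contradiction using the I-cosiness criterion recalled in the annex. If $\fc^X$ were standard then $X_0$ would be I-cosy, so for every $\epsilon>0$ there would exist a non-anticipative coupling $(X',X'')$ of two copies of $(X_n)_{n \le 0}$ with $X'_{2N}$ and $X''_{2N}$ independent for some $N \le 0$ and $\bar{\P}[X'_0 \ne X''_0] < \epsilon$. The key step is to iterate the brick condition one brick at a time: by immersion, conditionally on $(X'_{2n-2},X''_{2n-2})$ the joint coupling restricted to brick $n$ is itself a non-anticipative coupling of the brick with itself, so the $\alpha_n$-condition gives
\[
\bar{\P}\bigl[X'_{2n} \ne X''_{2n} \mid X'_{2n-2},X''_{2n-2}\bigr] \ge (1-\alpha_n)\,\one_{[X'_{2n-2} \ne X''_{2n-2}]}.
\]
Iterating from $n=N+1$ up to $n=0$ and using $\bar{\P}[X'_{2N} \ne X''_{2N}] \ge 1 - 1/|F_{2N}| \ge 1/2$ (because $F_{2N}$ is not a singleton) together with $\prod_{n \le 0}(1-\alpha_n) > 0$ (from $\sum \alpha_n < \infty$), one obtains $\bar{\P}[X'_0 \ne X''_0] \ge c$ for a positive constant $c$ independent of $N$, which contradicts I-cosiness for $\epsilon < c$.

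For the claim $|F_{2n}| \to +\infty$, I would apply the same brick $n$ condition, but now to the simplest non-anticipative coupling: take $(X',X'')$ to be two completely independent copies of $(X_n)_{n \le 0}$. Each copy's filtration is then trivially immersed in the joint filtration. Brick independence of $X_{2n-2}$ and $X_{2n}$ together with independence of the two copies entails that $(X'_{2n},X''_{2n})$ is independent of $(X'_{2n-2},X''_{2n-2})$, so
\[
\bar{\P}\bigl[X'_{2n} = X''_{2n} \mid X'_{2n-2} \ne X''_{2n-2}\bigr] = \bar{\P}[X'_{2n} = X''_{2n}] = \sum_{z \in F_{2n}} \P[X_{2n}=z]^2 = \frac{1}{|F_{2n}|}.
\]
The brick $\alpha_n$-condition forces this quantity to be $\le \alpha_n$, so $|F_{2n}| \ge 1/\alpha_n$, which tends to $+\infty$ since $\sum \alpha_n < \infty$ entails $\alpha_n \to 0$. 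Strictly speaking this argument does not invoke triviality of $\fc^X_{-\infty}$, but the conclusion is exactly the one stated in the theorem.

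The principal difficulty lies in the first part, specifically in rigorously justifying the one-brick-at-a-time application of the $\alpha_n$-condition inside an arbitrary non-anticipative coupling of the whole process. The immersion property is precisely what licenses this: conditionally on the joint past $\sigma(X'_k,X''_k\,;\,k \le 2n-2)$, each of $X'$ and $X''$ evolves through brick $n$ according to its own Markov kernel, so the conditional joint law over brick $n$ is itself a non-anticipative coupling of the brick's law with itself, and the $\alpha_n$-condition (which was stated for all such couplings) applies unchanged. Once this verification is set up cleanly, both the induction for part (i) and the one-line computation for part (ii) are immediate.
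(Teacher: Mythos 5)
Your argument for the non-standardness of $\fc^X$ follows essentially the same route as the paper: show that $X_0$ fails the I-cosiness criterion by iterating the brick inequality $\bar{\P}[X'_{2n}\ne X''_{2n}\mid \sigma(X'_{2n-2},X''_{2n-2})]\ge(1-\alpha_n)\one_{[X'_{2n-2}\ne X''_{2n-2}]}$ down any non-anticipative coupling, then use $\prod_{k\le 0}(1-\alpha_k)>0$ and $|F_{2N}|\ge 2$ to get a uniform lower bound on $\bar{\P}[X'_0\ne X''_0]$. Your remark that the immersion property is precisely what licenses applying the brick condition one brick at a time is the right point to insist on, and it is implicit in the paper as well. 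This part is correct and matches the paper.

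For the growth of $|F_{2n}|$, you take a genuinely different route. The paper applies a separate proposition (Proposition~\ref{necessity of large sets}) to the sequence $(X_{2n})_{n\le 0}$: it uses the triviality of $\fc^X_{-\infty}$, the von Weizs\"acker exchange of intersections and suprema of $\sigma$-fields to show that the tail of the doubled filtration is trivial, a $0$--$1$ law to determine $\bar{\P}(\liminf[Y'_n\ne Y''_n])$, and Fatou's lemma, and concludes only that $|E_n|\to 1$ or $|E_n|\to+\infty$; the non-singleton hypothesis on $F_{2n}$ then rules out the first alternative. You instead apply the brick condition directly to the independent coupling: since $X_{2n-2}\perp X_{2n}$ within each copy and the two copies are independent, $(X'_{2n},X''_{2n})$ is independent of $(X'_{2n-2},X''_{2n-2})$, so the conditional probability of a collision is exactly $1/|F_{2n}|$; the brick inequality on the (positive-probability, since $|F_{2n-2}|\ge 2$) event $[X'_{2n-2}\ne X''_{2n-2}]$ then forces $1/|F_{2n}|\le\alpha_n$. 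This is correct, much more elementary, yields the explicit bound $|F_{2n}|\ge 1/\alpha_n$, and --- as you observe --- does not invoke the tail-triviality hypothesis at all, so it actually strengthens the stated conclusion. What the paper's Proposition~\ref{necessity of large sets} buys in exchange is generality: it is a standalone statement that assumes only the one-step inequality for the \emph{independent} coupling (not the full brick condition over all non-anticipative couplings) and makes no Markov or brick-structure hypothesis; it is a tool of independent interest, at the cost of needing the triviality of the tail $\sigma$-field and a more involved proof.
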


\begin{proof}[Proof of theorem~\ref{Tsirelson's construction}]
First, we show that $X_0$ does not fulfills the I-cosiness criterion
(see section~\ref{annex}). Indeed, set
$$c = \prod_{k \le 0}(1-\alpha_k) > 0$$
and consider any non-anticipative coupling $(X'_n)_{n \le 0}$ and
$(X''_n)_{n \le 0}$ of the process $(X_n)_{n \le 0}$,
defined on some probability space $(\bar{\Omega},\bar{\ac},\bar{\P})$.
By assumption, for every $n \le 0$,
$$\bar{\P}[X'_{2n} \ne X''_{2n}|\sigma(X'_{2n-2},X''_{2n-2})] \ge
(1-\alpha_n) \one_{[X'_{2n-2} \ne X''_{2n-2}]}.$$
By induction, for every $n \le 0$,
\begin{eqnarray*}
\bar{\P}[X'_0 \ne X''_0|\sigma(X'_{2n},X''_{2n})]
\ge \Big(\prod_{k=n+1}^0 (1-\alpha_k) \Big) \one_{[X'_{2n} \ne X''_{2n}]} 
\ge c \one_{[X'_{2n} \ne X''_{2n}]} 
\end{eqnarray*}
If, for some $N \le 0$, the $\sigma$-fields $\F^{X'}_{2N}$ and
$\F^{X''}_{2N}$ are independent, then
$$\bar{\P}[X'_0 \ne X''_0] \ge c \bar{\P}[X'_{2N} \ne X''_{2N}]
= c(1-|F_{2n}|^{-1}) \ge c/2.$$
Hence $\bar{\P}[X'_0 \ne X''_0]$ is bounded away from $0$, which negates
the I-cosiness criterion. The non-standardness of $\F^X$ follows.

The second part of the theorem directly follows from the next proposition,
applied to the sequence $(Y_n)_{n \le 0} = (X_{2n})_{n \le 0}$.
\end{proof}


\begin{prop}~\label{necessity of large sets}
Let $(\gamma_n)_{n \le 0}$ be a sequence of positive constants such that
$$\prod_{n \le 0}\gamma_n > 0.$$
Let $(Y_n)_{n \le 0}$ be a family of random variables which are uniformly
distributed on finite sets $(E_n)_{n \le 0}$. Let $(Y'_n)_{n \le 0}$ and
$(Y''_n)_{n \le 0}$ be independent copies of the process $(Y_n)_{n \le 0}$,
defined on some probability space $(\bar{\Omega},\bar{\ac},\bar{\P})$.
Assume that $\fc^Y_{-\infty}$ is trivial and that for every $n \le 0$,
$$\bar{\P}[Y'_{n} \ne Y''_{n}|\sigma(Y'_{n-1},Y''_{n-1})] \ge \gamma_n
\one_{[Y'_{n-1} \ne Y''_{n-1}]}.$$
Then $|E_n| \to 1$ or $|E_n| \to +\infty$ as $n \to -\infty$.
\end{prop}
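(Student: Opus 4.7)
The plan is to argue by contrapositive: assuming $|E_n|$ tends neither to $1$ nor to $+\infty$, I will produce a non-trivial element of $\fc^Y_{-\infty}$.

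First, I would show that $f_n := 1 - 1/|E_n| = \bar\P(Y'_n \ne Y''_n)$ has a limit as $n \to -\infty$. Taking expectations in the hypothesis gives $f_n \ge \gamma_n f_{n-1}$, so $f_n/\prod_{k\le n}\gamma_k$ is non-decreasing in $n$. Since $\prod_{k\le 0}\gamma_k > 0$, the tail products $\prod_{k\le n}\gamma_k$ tend to $1$ as $n \to -\infty$, so $f_n$ converges to some $f_\infty \in [0,1]$. If $f_\infty \in \{0,1\}$ we are done; otherwise $|E_n|$ is eventually the constant $m := 1/(1-f_\infty) \ge 2$, and by truncation I may assume $|E_n|=m$ for all $n \le 0$.

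Second, I would extract a genuine permutation from the one-step transition kernel $M^{(n)}_{y,z} := \bar\P(Y_n = z \mid Y_{n-1} = y)$. Uniform marginals make $M^{(n)}$ a doubly stochastic $m \times m$ matrix; independence of $Y'$ and $Y''$ rewrites the hypothesis as $\sum_z M^{(n)}_{y,z} M^{(n)}_{y',z} \le 1 - \gamma_n$ for $y \ne y'$. Column stochasticity gives $\sum_{y'} M^{(n)}_{y',\cdot} = \one$; pairing row $y$ with this vector yields
\[
1 = \|M^{(n)}_{y,\cdot}\|_2^2 + \sum_{y' \ne y}\langle M^{(n)}_{y,\cdot},M^{(n)}_{y',\cdot}\rangle \le \|M^{(n)}_{y,\cdot}\|_2^2 + (m-1)(1-\gamma_n),
\]
whence $\max_z M^{(n)}_{y,z} \ge \|M^{(n)}_{y,\cdot}\|_2^2 \ge 1 - (m-1)(1-\gamma_n)$. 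For $n$ sufficiently negative this maximum exceeds $1/2$, so a column-sum argument forces $\pi_n(y) := \arg\max_z M^{(n)}_{y,z}$ to be a bijection $E_{n-1} \to E_n$ (a collision would make some column sum exceed $1$), with $\bar\P(Y_n \ne \pi_n(Y_{n-1})) \le (m-1)(1-\gamma_n)$.

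Third, since $\prod \gamma_n > 0$ is equivalent to $\sum (1-\gamma_n) < +\infty$, Borel--Cantelli gives $Y_n = \pi_n(Y_{n-1})$ for all $n$ smaller than some random threshold, almost surely. Setting $\Pi_n := \pi_0 \circ \pi_{-1} \circ \cdots \circ \pi_{n+1}$ and $M_n := \Pi_n(Y_n) \in E_0$, one has $M_n$ measurable with respect to $\sigma(Y_n) \subseteq \fc^Y_n$, and $\bar\P(M_n \ne Y_0) \le \sum_{k=n+1}^0 \bar\P(Y_k \ne \pi_k(Y_{k-1})) \to 0$. Passing to an a.s.\ convergent subsequence $n_k \to -\infty$ and using that every $\fc^Y_N$ is closed under a.s.\ limits while containing $M_{n_k}$ as soon as $n_k \le N$, I conclude $Y_0 \in \fc^Y_N$ for every $N \le 0$, hence $Y_0 \in \fc^Y_{-\infty}$. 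Triviality then forces $Y_0$ a.s.\ constant, contradicting its uniformity on an $m$-element set with $m \ge 2$.

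The main obstacle is the quantitative rigidity step in the second paragraph: turning the $(1-\gamma_n)$-near-orthogonality of the rows of a doubly-stochastic matrix into a genuine permutation whose one-step error $\bar\P(Y_n \ne \pi_n(Y_{n-1}))$ remains summable. Once this is in hand, the rest is a standard combination of Borel--Cantelli and closure of the tail $\sigma$-fields under a.s.\ limits.
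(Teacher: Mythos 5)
Your approach is genuinely different from the paper's. The paper works directly with the pair $(Y',Y'')$: it uses the Weizs\"acker exchange lemma to show that the tail $\sigma$-field $(\fc^{Y'}\vee\fc^{Y''})_{-\infty}$ is trivial, applies the 0--1 law to $\liminf[Y'_n\ne Y''_n]$, gives a direct product estimate showing this event has positive probability when $|E_n|\ge 2$ infinitely often, and then uses Fatou to force $|E_n|\to\infty$. You instead go by contrapositive, reduce to the case $|E_n|\equiv m\ge 2$, and extract a near-deterministic \emph{permutation} from each doubly-stochastic transition matrix by a nice near-orthogonality argument; this avoids the exchange lemma entirely (only triviality of $\fc^Y_{-\infty}$ is invoked) and gives structural information about what such a process must look like. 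Steps one and two of your plan are correct: the monotonicity of $f_n/\prod_{k\le n}\gamma_k$, the reduction to constant $m$, the double stochasticity of $M^{(n)}$, the bound $\max_z M^{(n)}_{y,z}\ge 1-(m-1)(1-\gamma_n)$, the injectivity of $\pi_n$ via the column-sum argument, and the bound $\bar\P(Y_n\ne\pi_n(Y_{n-1}))\le(m-1)(1-\gamma_n)$ all check out (after a further truncation so that $(m-1)(1-\gamma_n)<1/2$ for all $n$ in range).

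There is, however, a genuine gap in your third step. You write $\bar\P(M_n\ne Y_0)\le\sum_{k=n+1}^{0}\bar\P(Y_k\ne\pi_k(Y_{k-1}))\to 0$, but as $n\to-\infty$ that sum \emph{increases} to the full (finite, not necessarily small) series $\sum_{k\le 0}\bar\P(Y_k\ne\pi_k(Y_{k-1}))$; it does not tend to $0$. Consequently you cannot conclude that $M_n\to Y_0$ in probability, and in fact it is not true in general that $Y_0$ should be $\fc^Y_{-\infty}$-measurable under your hypotheses (the deterministic propagation need only hold for $n$ below a random threshold, not all the way up to $0$). The repair is short: since $\{M_n\ne M_{n-1}\}=\{Y_n\ne\pi_n(Y_{n-1})\}$ (because $\Pi_n$ is a bijection and $\Pi_{n-1}=\Pi_n\circ\pi_n$), Borel--Cantelli gives that $(M_n)$ is a.s.\ eventually constant as $n\to-\infty$, hence converges a.s.\ to some $M_\infty$; each $M_n$ is $\fc^Y_n$-measurable, so $M_\infty$ is $\fc^Y_N$-measurable for every $N$ and hence $\fc^Y_{-\infty}$-measurable; and since each $M_n$ is uniform on the $m$-element set $E_0$ and the variables are discrete, a.s.\ convergence gives $\bar\P(M_\infty=z)=\lim_n\bar\P(M_n=z)=1/m$, so $M_\infty$ is uniform on $E_0$. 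Triviality of $\fc^Y_{-\infty}$ then forces $M_\infty$ to be a.s.\ constant, contradicting $m\ge 2$. With this substitution (argue about $M_\infty$, not $Y_0$), your proof is complete and gives a valid alternative to the paper's.
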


\begin{proof}[Proof of proposition~\ref{necessity of large sets}]
By the independence of $(Y'_n)_{n \le 0}$ and $(Y''_n)_{n \le 0}$, the following
exchange properties apply (see~\cite{Weizsacker})
\begin{eqnarray*}
\bigcap_{m \le 0} \bigcap_{n \le 0} \left( \fc^{Y'}_m \vee \fc^{Y''}_n \right)
&=& \bigcap_{m \le 0}
\left( \fc^{Y'}_m \vee \Big( \bigcap_{n \le 0} \fc^{Y''}_n \Big) \right) \\
&=& \bigcap_{m \le 0}
\left( \fc^{Y'}_m \vee \fc^{Y''}_{-\infty} \right) \\
&=& \left( \bigcap_{m \le 0} \fc^{Y'}_m \right) \vee \fc^{Y''}_{-\infty} \\
&=& \fc^{Y'}_{-\infty} \vee \fc^{Y''}_{-\infty}.
\end{eqnarray*}
Using that $\fc^{Y'}_m \vee \fc^{Y''}_n$ is non-decreasing with respect to
$m$ and $n$, one gets 
$$(\fc^{Y'} \vee \fc^{Y''})_{-\infty} =
\bigcap_{n \le 0} \left( \fc^{Y'}_n \vee \fc^{Y''}_n \right) =
\bigcap_{m \le 0} \bigcap_{n \le 0} \left( \fc^{Y'}_m \vee \fc^{Y''}_n \right).$$
Hence the tail $\sigma$-field $(\fc^{Y'} \vee \fc^{Y''})_{-\infty}$ is trivial.
Thus the asymptotic event
$$\liminf_{n \to -\infty} [Y'_n \ne Y''_n]$$
has probability $0$ or $1$.

But a recursion shows that for every $n \le 0$
$$\bar{\P} \Big( \bigcap_{n \le k \le 0} [Y'_{k} \ne Y''_{k}] \Big|
\sigma(Y'_{n},Y''_{n}) \Big)
\ge \Big( \prod_{n+1 \le k \le 0} \gamma_k \Big) \one_{[Y'_{n} \ne Y''_{n}]}.$$
By taking expectations,
$$\bar{\P} \Big( \bigcap_{n \le k \le 0} [Y'_{k} \ne Y''_{k}] \Big) \ge
\big(1-|E_n|^{-1} \big) \prod_{n+1 \le k \le 0} \gamma_k.$$

If $|E_n| \ge 2$ for infinitely many $n \le 0$, then
$$\bar{\P} \Big( \bigcap_{k \le 0} [Y'_{k} \ne Y''_{k}] \Big) \ge
\frac{1}{2} \prod_{k \le 0} \gamma_k > 0.$$
Thus $|E_n| \ge 2$ for every $n \le 0$ and
$$\bar{\P}(\liminf_{n \to -\infty} [Y'_n \ne Y''_n])=1.$$
But by Fatou's lemma,
$$\bar{\P}(\liminf_{n \to -\infty} [Y'_n \ne Y''_n]) \le
\liminf_{n \to -\infty} \bar{\P}[Y'_n \ne Y''_n].$$
Hence $1-|E_n|^{-1} = \bar{\P}[Y'_n \ne Y''_n] \to 1$ thus
$|E_n| \to +\infty$ as $n \to -\infty$.
\end{proof}

\subsection{Choosing the size of the sets $F_n$}

The last theorem explains the necessity to have bricks $(Z_{0},Z_{1},Z_{2})$
such that the set $F_{2}$ of all possible values of $Z_2$ is arbitrarily
large, and the set $F_{0}$  of all possible values of $Z_0$ is much larger.
In Tsirelson's example, the size of $F_{2}$ is $p^5$ where $p$ is a prime
number, whereas the size of $F_{0}$ is $(p^4+p^3+p^2+p+1)(p^2+1)$.

Such bricks provided cannot be glued together since the size of $F_{2}$
is not a power of a prime number: it has at least two prime divisors since 
the greatest common divisor of
$p^4+p^3+p^2+p+1$ and $p^2+1$ is $1$. Replacing $\zzf_p$ by a more general
finite field would not change anything since the size of any finite field
is necessarily a power of a prime number. Fortunately, a slight modification
solve this problem.

A first way to solve the problem is to choose a prime number $q$ such that
$q^5$ is slightly smaller than $(p^4+p^3+p^2+p+1)(p^2+1)$ and to call
$F_{0}$ a subset with size $q^5$ of all two-dimensional linear
subspaces of $(\zzf_p)^5$. After this modification, the law of
$Z_{1}$ (a random line choose uniformly along the affine lines which
are parallel to the linear plane $Z_{0}$) will no longer be an uniform law,
but the law of $Z_{1}$ plays no particular role in the construction.

A second solution is to replace the affine {\it lines} by the affine
{\it planes} in the definition of $Z_{1}$ and $F_{1}$.
In this last solution, $Z_{0}$ is a deterministic function of
$Z_{1}$ (namely, the vector plane is the direction of the affine plane)
and $Z_{1}$ is a deterministic function of $(Z_{0},Z_{2})$
(namely, $Z_{1}$ is the only affine plane
which is parallel to $Z_{0}$ and contains $Z_{2}$). These two additional
properties have many advantages. First, the construction and the proofs are
even simpler. Next, we will use them to get stronger results.

From now on, we will consider only bricks having these two additional
properties.

\subsection{Strong bricks}

Let us give a rigorous definition.

\begin{defi}
Fix $\alpha \in ]0,1[$ and two positive integers $r_{1},r_{2}$.
Let $F_{0},F_{1},F_{2}$ be finite sets.
We will say that a triple $(Z_{0},Z_{1},Z_{2})$ of uniform random variables
with values in $F_{0},F_{1},F_{2}$ is a strong $(r_{1},r_{2})$-adic $\alpha$-brick
if
\begin{itemize}
\item $Z_{0}$ and $Z_{2}$ are independent.
\item $Z_{1}$ is a deterministic function of $(Z_{0},Z_{2})$;
\item $Z_{0}$ is a deterministic function of $Z_{1}$;
\item the conditional law of $Z_{1}$ given $Z_{0}$
is uniform on some finite random set of size $r_{1}$;
\item the conditional law of $Z_{2}$ given $Z_{1}$
is uniform on some finite random set of size $r_{2}$;
\item for every distinct elements $z'_{1}$ and $z''_{1}$ in $F_{1}$,
\begin{equation}~\label{bad coupling}
\sum_{z \in F_{2}} \min \big(\P[Z_{2}=z|Z_{1}=z'_{1}],\P[Z_{2}=z|Z_{1}=z''_{1}] \big)
\le \alpha.
\end{equation}
\end{itemize}
\end{defi}

The next lemma shows that the definition of strong bricks is more restrictive
that the definition of Tsirelson's bricks.

\begin{lemm}~\label{strong bricks are bricks}
If $(Z_{0},Z_{1},Z_{2})$ is a strong $\alpha$-brick, then
for any non-anticipative coupling $(Z'_{0},Z'_{1},Z'_{2})$ and
$(Z''_{0},Z''_{1},Z''_{2})$ of $(Z_{0},Z_{1},Z_{2})$, defined on some
probability space $(\bar{\Omega},\bar{\ac},\bar{\P})$,
$$\bar{\P}[Z'_2 \ne Z''_2|\sigma(Z'_{1},Z''_{1})] \ge
(1-\alpha) \one_{[Z'_{1} \ne Z''_{1}]} \ge
(1-\alpha) \one_{[Z'_{0} \ne Z''_{0}]}.$$
Thus, $(Z_{0},Z_{1},Z_{2})$ is a Tsirelson's $\alpha$-brick
\end{lemm}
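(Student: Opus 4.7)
The second inequality is immediate: the defining property that $Z_{0}$ is a deterministic function of $Z_{1}$ transfers to each copy, so $Z'_{0} \ne Z''_{0}$ forces $Z'_{1} \ne Z''_{1}$, whence $\one_{[Z'_{1} \ne Z''_{1}]} \ge \one_{[Z'_{0} \ne Z''_{0}]}$. Multiplying both sides by $(1-\alpha) > 0$ preserves the inequality, so it suffices to establish the first inequality.

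To prove the first inequality, I would reduce it to showing that $\bar{\P}[Z'_{2} = Z''_{2} \mid \sigma(Z'_{1},Z''_{1})] \le \alpha$ almost surely on the event $[Z'_{1} \ne Z''_{1}]$. The key tool will be the non-anticipation (immersion) assumption: each of the filtrations $\fc^{Z'}$ and $\fc^{Z''}$ is immersed in the natural filtration of $(Z',Z'')$. In particular, $Z'_{2}$ is independent of $\sigma(Z''_{0},Z''_{1})$ conditionally on $\sigma(Z'_{0},Z'_{1})$, so its conditional law given $\sigma(Z'_{0},Z'_{1},Z''_{0},Z''_{1})$ equals its conditional law given $\sigma(Z'_{0},Z'_{1})$, namely the original transition $\lc(Z_{2} \mid Z_{1} = Z'_{1})$, which is a function of $Z'_{1}$ alone. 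By integrating out $(Z'_{0},Z''_{0})$ one obtains
$$\bar{\P}[Z'_{2} = z \mid \sigma(Z'_{1},Z''_{1})] = \P[Z_{2} = z \mid Z_{1} = Z'_{1}],$$
and symmetrically for $Z''_{2}$.

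Then I would apply the elementary coupling inequality: for any coupling $(A,B)$ of random variables with prescribed marginals $\mu$ and $\nu$ on a common finite set, $\P[A=B] \le \sum_{z} \min(\mu(z),\nu(z))$. Applied conditionally on $\sigma(Z'_{1},Z''_{1})$ with $\mu(\cdot) = \P[Z_{2} = \cdot \mid Z_{1} = Z'_{1}]$ and $\nu(\cdot) = \P[Z_{2} = \cdot \mid Z_{1} = Z''_{1}]$, this yields
$$\bar{\P}[Z'_{2} = Z''_{2} \mid \sigma(Z'_{1},Z''_{1})] \le \sum_{z \in F_{2}} \min\bigl(\P[Z_{2}=z \mid Z_{1}=Z'_{1}],\P[Z_{2}=z \mid Z_{1}=Z''_{1}]\bigr).$$
On the event $[Z'_{1} \ne Z''_{1}]$ the defining inequality~(\ref{bad coupling}) of a strong brick bounds this sum by $\alpha$, giving the desired estimate.

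The only subtle step is the use of immersion to reduce the conditional law of $Z'_{2}$ given $\sigma(Z'_{1},Z''_{1})$ to a function of $Z'_{1}$ alone (and symmetrically for $Z''_{2}$). Once this is in place, the elementary coupling inequality and the definition of a strong $\alpha$-brick close the argument. The final assertion that $(Z_{0},Z_{1},Z_{2})$ is a Tsirelson's $\alpha$-brick then follows by conditioning on $\sigma(Z'_{0},Z''_{0})$ using the tower property, combined with the second inequality already established.
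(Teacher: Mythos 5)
Your proposal is correct and follows essentially the same route as the paper: use the non-anticipation (immersion) hypothesis together with the Markov property to reduce the conditional law of $Z'_{2}$ to a function of $Z'_{1}$ alone, then apply the elementary coupling bound $\P[A=B]\le\sum_z\min(\mu(z),\nu(z))$ and the defining inequality of a strong brick, and finally pass from $[Z'_{1}\ne Z''_{1}]$ to $[Z'_{0}\ne Z''_{0}]$ via the deterministic relation $Z_{0}=f(Z_{1})$. One minor simplification you missed: since $Z'_{0}$ and $Z''_{0}$ are deterministic functions of $Z'_{1}$ and $Z''_{1}$, the $\sigma$-field $\sigma(Z'_{0},Z'_{1},Z''_{0},Z''_{1})$ already equals $\sigma(Z'_{1},Z''_{1})$, so no genuine ``integrating out'' is needed; likewise the Markov property of the triple (required in the Tsirelson-brick definition) should be noted explicitly as an immediate consequence of $Z_{0}$ being $\sigma(Z_{1})$-measurable, as the paper does at the outset.
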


\begin{proof}[Proof of lemma~\ref{strong bricks are bricks}]
The triple $(Z_{0},Z_{1},Z_{2})$ is Markov since $Z_{0}$ is a function of $Z_{1}$.

Now, let $(Z'_{0},Z'_{1},Z'_{2})$ and $(Z''_{0},Z''_{1},Z''_{2})$
be any non-anticipative coupling of $(Z_{0},Z_{1},Z_{2})$, defined on some
probability space $(\bar{\Omega},\bar{\ac},\bar{\P})$.
Set $\gc = \sigma(Z'_{0},Z'_{1},Z''_{0},Z''_{1})$.
By the non-anticipative and the Markov properties,
$$\lc(Z'_{2}|\gc) = \lc(Z'_{2}|\sigma(Z'_{0},Z'_{1}))
= \lc(Z'_{2}|\sigma(Z'_{1}))$$
and the same holds with $Z''$.

Thus for any distinct values $z',z''$ in $F_{1}$, one has, on the event
$[Z'_{1}=z'\ ;\ Z''_{1}=z'']$,
\begin{eqnarray*}
\P[Z'_{2} = Z''_{2}|\gc]
&=& \sum_{z \in F_{2}} \P[Z'_{2} = z\ ;\ Z''_{2}=z|\gc]\\
&\le& \sum_{z \in F_{2}} \P[Z'_{2n} = z|\gc] \wedge \P[Z''_{2}=z|\gc]\\
&=& \sum_{z \in F_{2}} \P[Z'_{2} = z|Z'_{1}=z'] \wedge \P[Z''_{2}=z|Z''_{1}=z'']\\
&=& \sum_{z \in F_{2}} \P[Z_{2} = z|Z_{1}=z'] \wedge \P[Z_{2}=z|Z_{1}=z'']\\
&\le& \alpha.
\end{eqnarray*}
Hence
$$\P[Z'_{2}  = Z''_{2}|\gc]
\le \alpha \one_{[Z'_{1} \ne Z''_{1}]} + \one_{[Z'_{1} = Z''_{1}]}.$$
Taking complements, one gets
$$\P[Z'_{2} \ne Z''_{2}|\gc]
\ge (1-\alpha) \one_{[Z'_{1} \ne Z''_{1}]}.$$
The last inequality follows from the inclusion
$[Z'_{0} \ne Z''_{0}] \subset [Z'_{1} \ne Z''_{1}]$.
\end{proof}

As we now see, the definition of a strong brick provides constraints
on the size of the sets $F_{0},F_{1},F_{2}$.

\begin{lemm}~\label{constraints} {\bf (Properties of bricks)}
Fix $\alpha \in ]0,1[$ and two positive integers $r_{1},r_{2}$.
Let $F_{0},F_{1},F_{2}$ be finite sets. Assume the existence of
a triple $(Z_{0},Z_{1},Z_{2})$ of uniform random variables with
values in $F_{0},F_{1},F_{2}$ such that $(Z_{0},Z_{1},Z_{2})$
is a $(r_{1},r_{2})$-adic $\alpha$-brick. Let $f : F_1 \to F_0$
and $g : F_0 \times F_2 \to F_1$ be the maps such that
$f(Z_1)=Z_0$ and $g(Z_0,Z_2)=Z_1$. Then:
\begin{enumerate}
\item the map $f$ is $r_1$ to one and the map $g$ is $r_2$ to one.
More precisely, for every $z_1 \in F_1$,
$g^{-1}(\{z_1\}) = \{f(z_1)\} \times S(z_1)$ where $S(z_1)$ is a subset
of $F_2$ of size $r_2$.
\item for every $z_1 \in F_{1}$, the law of $Z_2$
conditionally on $Z_1=z_1$ is uniform on $S(z_1)$.
\item for each $z_0 \in F_0$, the subsets $S(z_1)$ for $z_1 \in f^{-1}(\{z_0\})$
form a partition of $F_2$ in $r_1$ blocks.
\item $|F_1| = r_1|F_0|$, $|F_0 \times F_2| = r_2|F_1|$
and $|F_2| = r_{1}r_{2}$.
\item for every distinct elements $z'_1$ and $z''_1$ in $F_{1}$,
$|S(z'_1) \cap S(z''_1)| \le \alpha r_2$.
\item if $|F_0| \ge 2$, then $r_2 \ge 1/\alpha$.
\end{enumerate}
\end{lemm}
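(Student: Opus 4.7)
The plan is to exploit the interplay between the deterministic functions $f,g$ and the uniform conditional laws on the fibers, then feed this structure into the total variation inequality~\eqref{bad coupling} to obtain points 5 and 6.

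First, I dispose of points 1, 2 and 4. Since $f(Z_1)=Z_0$ almost surely, the conditional support of $Z_1$ given $Z_0=z_0$ lies inside $f^{-1}(\{z_0\})$; as $Z_1$ is also uniform on $F_1$ and the conditional law given $Z_0=z_0$ is uniform on a size-$r_1$ set, integrating $\P[Z_1=z_1\mid Z_0=z_0]\cdot\P[Z_0=z_0]$ over $z_0$ and comparing with $\P[Z_1=z_1]=1/|F_1|$ forces this conditional support to be exactly $f^{-1}(\{z_0\})$ with cardinality $r_1$, and gives $|F_1|=r_1|F_0|$. Next, since $Z_0$ and $Z_2$ are independent and uniform, the support of $(Z_0,Z_2)$ is all of $F_0\times F_2$, so $g^{-1}(\{z_1\})$ is determined pointwise, and applying $f$ forces the first coordinate to be $f(z_1)$. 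Then by the uniform conditional law of $Z_2$ given $Z_1=z_1$, its support $S(z_1):=\{z_2\in F_2:\ g(f(z_1),z_2)=z_1\}$ has cardinality $r_2$. This proves points 1 and 2.

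For point 3, fixing $z_0\in F_0$, the map $g(z_0,\cdot):F_2\to F_1$ actually takes values in $f^{-1}(\{z_0\})$ (because $f\circ g(z_0,z_2)=z_0$), and its fiber over $z_1\in f^{-1}(\{z_0\})$ is precisely $S(z_1)$. Hence $\{S(z_1)\}_{z_1\in f^{-1}(\{z_0\})}$ partitions $F_2$ into $r_1$ blocks of size $r_2$, yielding $|F_2|=r_1 r_2$ and $|F_0\times F_2|=r_1 r_2 |F_0|=r_2|F_1|$, which proves point 4.

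The last two points use the uniform conditional law $\P[Z_2=z\mid Z_1=z_1]=\one_{z\in S(z_1)}/r_2$. Substituting this into~\eqref{bad coupling}, the sum telescopes to $|S(z'_1)\cap S(z''_1)|/r_2\le\alpha$, giving point 5. For point 6, if $|F_0|\ge 2$, I pick distinct $z_0,z'_0\in F_0$ and any $z_2\in F_2$ and set $z_1=g(z_0,z_2)$, $z'_1=g(z'_0,z_2)$. Then $f(z_1)=z_0\ne z'_0=f(z'_1)$ so $z_1\ne z'_1$, while $z_2$ lies in $S(z_1)\cap S(z'_1)$, so $|S(z_1)\cap S(z'_1)|\ge 1$; combined with point 5, this forces $\alpha r_2\ge 1$. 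The main obstacle, really the only nontrivial point, is the bookkeeping at the start: one must carefully combine the uniform marginals with the almost-sure identities $f(Z_1)=Z_0$ and $g(Z_0,Z_2)=Z_1$ to identify conditional supports with the combinatorial fibers; once this is done, everything else is a counting exercise.
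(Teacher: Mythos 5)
Your proof is correct and takes essentially the same approach as the paper's: both use the uniform marginals and the deterministic identities $f(Z_1)=Z_0$, $g(Z_0,Z_2)=Z_1$ to pin down the conditional distributions as uniform on the combinatorial fibers, then substitute the resulting expression $\P[Z_2=z\mid Z_1=z_1]=\one_{S(z_1)}(z)/r_2$ into~\eqref{bad coupling} for point 5, and for point 6 pick distinct $z'_0,z''_0\in F_0$, a single $z_2\in F_2$, and show $z_2\in S(g(z'_0,z_2))\cap S(g(z''_0,z_2))$ with $g(z'_0,z_2)\ne g(z''_0,z_2)$. The one spot where you go slightly beyond the paper is point 3: the paper merely asserts that the third and fourth points ``follow,'' while you give the clean argument that $g(z_0,\cdot):F_2\to f^{-1}(\{z_0\})$ has fibers $S(z_1)$ and hence partitions $F_2$, which is worth spelling out; this in no way changes the overall method.
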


\begin{proof}[Proof of lemma~\ref{constraints}]
By hypothesis, for every $(z_0,z_1,z_2) \in F_0 \times F_1 \times F_2$,
$$\P[Z_0=z_0\ ;\ Z_1=z_1] = \frac{1}{|F_1|} \one_{[z_0=f(z_1)]}.$$
Hence
$$\P[Z_1=z_1|Z_0=z_0]
= \frac{1}{|f^{-1}(\{z_0\})|} \one_{f^{-1}(\{z_0\})}(z_1),$$
which shows that $|f^{-1}(\{z_0\})| = r_1$.

By the same way,
$$\P[Z_0=z_0\ ;\ Z_1=z_1\ ;\ Z_2=z_2] = \frac{1}{|F_0 \times F_2|}
\one_{[z_1=g(z_0,z_2)]}.$$
Hence
$$\P[Z_0=z_0\ ;\ Z_2=z_2|Z_1=z_1] =
\frac{1}{|g^{-1}(\{z_1\})|} \one_{g^{-1}(\{z_1\})}(z_0,z_2),$$
which shows that $|g^{-1}(\{z_1\})| = r_2$.

Since $(Z_0,Z_2)$ is uniform on $F_0 \times F_2$,
the equalities $Z_0 = f(Z_1)$ and $Z_1 = g(Z_0,Z_2)$ shows that
$z_0 = f(g(z_0,z_2))$ for every $(z_0,z_2) \in F_0 \times F_2$. Hence,
for every $z_1 \in F_1$, if $(z_0,z_2) \in g^{-1}(\{z_1\})$ then $z_0 = f(z_1)$.
This shows that
$g^{-1}(\{z_1\}) = \{f(z_1)\} \times S(z_1)$ where $S(z_1)$ is some subset
of $F_2$.

Thus, for every $(z_1,z_2) \in F_1 \times F_2$,
$$\P[Z_2=z_2|Z_1=z_1] = \P[Z_0=f(z_1)\ ;\ Z_2=z_2|Z_1=z_1] =
\frac{1}{|S(z_1)|} \one_{S(z_1)}(z_2).$$
Hence the law of $Z_2$ conditionally on $Z_1=z_1$ is uniform on $S(z_1)$
which has size $r_2$. This completes the proof of the first two points.

The third and fourth points follow.

Fix two distinct elements $z'_1$ and $z''_1$ in $F_{1}$.
Then for every $z \in F_{2}$,
$$\min \big(\P[Z_{2}=z|Z_{1}=z'_1],\P[Z_{2}=z|Z_{1}=z''_1]
= \frac{1}{r_2} \min (\one_{S(z'_1)}(z),\one_{S(z''_1)}(z)).$$
Summing over $z$ and using the inequality~\ref{bad coupling},
one gets
$$|S(z'_1) \cap S(z''_1)| \le \alpha r_2,$$
which is the fifth point.

If $|F_0| \ge 2$, then one can choose two distinct elements
$z'_0$ and $z''_0$ in $F_0$. Let $z_2 \in F_2$, $z'_1 = g(z'_0,z_2)$ and
$z''_1 = g(z'_0,z_2)$. Then $z'_1$ and $z''_1$ are distinct elements
in $F_{1}$ since $f(z'_1)=z'_0$ and $f(z''_1)=z''_0$ are distinct.
But $z_2$ belongs to $S(z'_1)$ since
$$\P[Z_1=z'_1|Z_2=z_2] = \P[Z_0=z'_0|Z_2=z_2]= |F_0|^{-1},$$
and $z_2$ also belongs to $S(z''_1)$.
Hence $1 \le |S(z'_1) \cap S(z''_1)| \le \alpha r_2$.
which shows the sixth point.
\end{proof}

\subsection{Getting bricks}

The next lemma provides a general method to get bricks.

\begin{lemm}~\label{general method} {\bf (Method to get bricks)}

Fix $\alpha \in ]0,1[$ and two positive integers $r_{1},r_{2}$.

Let $F_{0},F_{2}$ be finite sets such that $F_{2}$ has size $r_{1}r_{2}$.

Let $Z_{0}$ and $Z_{2}$ be independent random variables, uniformly
distributed in $F_{0}$ and $F_{2}$.

Let $(\Pi_z)_{z \in F_{0}}$ be a family of partitions of $F_{2}$
indexed by $F_{0}$ such that
\begin{itemize}
\item each partition $\Pi_z$ has $r_{1}$ blocks
$S_{z,1},\ldots,S_{z,r_{1}}$;
\item each block has $r_{2}$ elements.
\item for any distinct $(z',i')$ and $(z'',i'')$ in $F_{0} \times \odc 1,r_{1} \fdc$,
$|S_{z',i'} \cap S_{z'',i''}| \le \alpha r_{2}$. 
\end{itemize}
(This ``transversality condition'' forces the partitions to be all different
and says that two blocks chosen in any two different partitions have a small
intersection.)

Define a random variable with values in $F_{1} = F_{0} \times \odc 1,r_{1} \fdc$ by
$Z_{1} = (Z_{0},J)$, where $J$ is the index of the only block of
$\Pi_{Z_{0}}$ which contains $Z_{2}$ (that is to say $Z_{2} \in S_{Z_{0},J})$.

Then $(Z_{0},Z_{1},Z_{2})$ is a $(r_{1},r_{2})$-adic $\alpha$-brick.
\end{lemm}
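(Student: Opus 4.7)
The plan is to go through the six bullet points of the definition of a strong $(r_1,r_2)$-adic $\alpha$-brick and verify each one directly from the construction. Independence of $Z_0$ and $Z_2$ is given, and the equality $Z_1=(Z_0,J)$ makes $Z_0$ a deterministic function of $Z_1$ (first coordinate projection), while $Z_1=(Z_0,J(Z_0,Z_2))$ with $J(z_0,z_2)$ the unique index $j$ such that $z_2\in S_{z_0,j}$ shows that $Z_1$ is a deterministic function of $(Z_0,Z_2)$.

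Next I would pin down the laws. Conditionally on $Z_0=z_0$, the variable $Z_2$ remains uniform on $F_2$ (by independence), hence $J$ is uniform on $\odc 1,r_1\fdc$ since the blocks of $\Pi_{z_0}$ partition $F_2$ into $r_1$ pieces of equal size $r_2$. Therefore $Z_1$ given $Z_0=z_0$ is uniform on the random set $\{z_0\}\times\odc 1,r_1\fdc$ of size $r_1$, which also yields that $Z_1$ is uniform on $F_1=F_0\times\odc 1,r_1\fdc$. Conditioning further on $Z_1=(z_0,j)$ amounts to conditioning $Z_2$ (uniform on $F_2$) on the event $\{Z_2\in S_{z_0,j}\}$, which gives the uniform law on $S_{z_0,j}$, a set of size $r_2$. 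This settles the conditions on conditional laws.

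It remains to derive the coupling inequality~\eqref{bad coupling} from the transversality hypothesis. Fix two distinct elements $z'_1=(z'_0,i')$ and $z''_1=(z''_0,i'')$ of $F_1$. From the previous step,
\[
\P[Z_2=z\,|\,Z_1=z'_1]=\frac{1}{r_2}\one_{S_{z'_0,i'}}(z),\qquad
\P[Z_2=z\,|\,Z_1=z''_1]=\frac{1}{r_2}\one_{S_{z''_0,i''}}(z),
\]
so summing the pointwise minimum over $z\in F_2$ gives $|S_{z'_0,i'}\cap S_{z''_0,i''}|/r_2$, which is $\le\alpha$ by the transversality assumption.

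There is essentially no obstacle here: the construction was precisely engineered so that each clause of the definition of a strong brick translates into one clause of the hypothesis. The only mildly subtle point is making sure that $Z_1$ is genuinely uniform on $F_1$ (as demanded implicitly by the definition, since it speaks of a \emph{triple of uniform random variables}); this follows from the equal-size condition on the blocks combined with the uniformity of $Z_0$ and $Z_2$, as sketched above.
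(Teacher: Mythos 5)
Your proof is correct and takes essentially the same approach as the paper: both verify, clause by clause, that the construction satisfies the definition of a strong $(r_1,r_2)$-adic $\alpha$-brick, with the last bullet following from the same one-line computation of $\sum_z \min(\P[Z_2=z\mid Z_1=z_1'],\P[Z_2=z\mid Z_1=z_1''])=|S_{z_1'}\cap S_{z_1''}|/r_2$. The only stylistic difference is that you reason via conditional distributions directly, while the paper writes out the joint probability $\P[Z_0=z_0;J=j;Z_2=z_2]$ and sums/divides; the content is identical.
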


\begin{proof}[Proof of lemma~\ref{general method}]
The first statement is obvious.

For every $z_{0} \in F_{0}$, $j \in \odc 1,r_{1} \fdc$ and $z_2 \in F_{2}$,
\begin{eqnarray*}
\P[Z_{0}=z_{0}\ ;\ J=j\ ;\ Z_{2}=z_2]
&=& \one_{[z_2 \in S_{z_{0},j}]}\ \P[Z_{0}=z_{0}\ ;\ Z_{2}=z_2]\\
&=& \one_{[z_2 \in S_{z_{0},j}]}\ \times \frac{1}{|F_{0}|}\
\times \frac{1}{r_{1}r_{2}}.
\end{eqnarray*}
Summing over $z_2$ yields
$$\P[Z_{0}=z_{0}\ ;\ J=j] = \frac{1}{|F_{0}|}\ \times \frac{1}{r_{1}}.$$
By division, one gets
$$\P[Z_{2}=z_2\ |\ Z_{0}=z_{0}\ ;\ J=j] = \one_{[z_2 \in S_{z_{0},j}]}
\ \times \frac{1}{r_{1}}.$$
The last two equalities show that $J$ is independent of $Z_{0}$ and
uniform on $\odc 1,r_{1} \fdc$, and that given $(Z_{0},J)$, $Z_{2}$ is uniform
on the block $S_{Z_{0},J}$. This proves the third and the fourth statement.

Let $z'_1$ and $z''_1$ be distinct elements in $F_{1}$. Conditionally on
$[Z_{1}=z'_1]$, the law $Z_{2}$ is uniform on the block $S_{z'_1}$.
Conditionally on $[Z_{1}=z''_1]$, the law $Z_{2}$ is uniform on the block
$S_{z''_1}$.
Thus
$$\sum_{z \in F_{2}} \P[Z_{2}=z|Z_{1}=z'_1] \wedge \P[Z_{2}=z|Z_{1}=z''_1]
= \sum_{z \in S_{z'_1} \cap S_{z''_1}} \frac{1}{r_2} \le \alpha.$$
The last statement follows.
\end{proof}

\subsection{Examples of bricks}

Algebra helps us to construct many partitions on a given set such that
each partition has a fix number of blocks, each block has a fix number
of elements and any two blocks chosen in any two different partitions
have a small intersection.

Let $q$ be any power of a prime number. Let $K$ be the field with $q$
elements, and $L$ the field with $q^2$ elements. Since $L$ is a quadratic
extension of $K$, $L$ is isomorphic to $K^2$ as a vector space on $K$.
Actually, one only needs to have a bijection between $K^2$ and $L$.

{\bf First example}

We set $r_{1}=r_{2}=q^4$,
$F_{0} = L^8$ (identified with the set $\mc_4(K)$ of all $4 \times 4$
matrices with entries in $K$) and
$F_{2} = K^8$ identified with $K^4 \times K^4$.

To each matrix $A \in \mc_4(K)$, one can associate the partition
of $K^8$ given by all four-dimensional affine subspaces of $K^8$
with equations $y=Ax+b$ where $b$ ranges over $K^4$. Each of these
subspaces has size $q^4$. But two subspaces of
equations $y=A'x+b'$ and $y=A''x+b''$ intersect in at most $q^3$ points
(a three dimensional affine subspace) when $A' \ne A''$.
Hence these partitions provide a $(q^4,q^4)$-adic $1/q$-brick.


{\bf Second example}

We set $r_{1}=r_{2}=q$,
$F_{0} = L^2$ (identified with $K^4$) and
$F_{2} = K^2$.

To each quadruple $(a,b,c,d) \in K^4$, one can associate the partition
of $K^2$ given by the $q$ graphs of equations $y=ax^4+bx^3+cx^3+dx+e$
where $e$ ranges over $K$. Each of these graphs has size $q$. But two
graphs with different $(a,b,c,d,e) \in K^4$ intersect in at most $4$ points.
Hence, if $p \ge 5$ these partitions provide a $(q,q)$-adic $4/q$-brick.

{\bf Gluing bricks together}

In both exemples above, the family of partitions provides bricks
which can be glued as follows. Let $q$ be any power of a prime number.
For each $n \le 0$, call $K_n$ the field with $q_n=q^{2^{|n|}}$ elements.
Set
$$\forall n \le 0,\ F_{2n} = K_n^8,\ r_{2n-1} = r_{2n} = q_n^4,
\alpha_n = 1/q_n \text{ and }
F_{2n-1} = F_{2n-2} \times \odc 1,r_{2n-1} \fdc$$
or
$$\forall n \le 0,\ F_{2n} = K_n^2,\ r_{2n-1} = r_{2n} = q_n,
\alpha_n = 4/q_n\text{ and }
F_{2n-1} = F_{2n-2} \times \odc 1,r_{2n-1} \fdc.$$
Start with a sequence of independent random variables
$(Z_{2n})_{n \le 0}$. For each $n \le 0$, consider the partitions of
$F_{2n}$ provided by the first or the second example and define
$Z_{2n-1}$ from $Z_{2n-2}$ and $Z_{2n}$ as in lemma~\ref{general method}.
By construction, $(Z_{2n},Z_{2n-1},Z_{2n})$ is an $(r_{2n-1},r_{2n})$-adic
$\alpha_n$-brick.

The next theorem shows that the process $(Z_n)_{n \le 0}$ thus defined
provides an example which proves the existence stated in
theorem~\ref{existence theorem}.

\subsection{Proof of theorem~\ref{existence theorem}}

Theorem~\ref{existence theorem} directly follows from the construction above
and from the theorem below.

\begin{theo}~\label{simple construction}
Let $(\alpha_n)_{n \le 0}$ be a sequence of reals in $]0,1[$ such that the
series $\sum_n \alpha_n$ converges.
Let $(Z_n)_{n \le 0}$ be any sequence of random variables taking values in some
finite sets $(F_n)_{n \le 0}$ of size $\ge 2$. Assume that
\begin{itemize}
\item the random variables $(Z_{2n})_{n \le 0}$ are independent;
\item for each $n \le 0$, $(Z_{2n-2},Z_{2n-1},Z_{2n})$ is an 
$(r_{2n-1},r_{2n})$-adic $\alpha_n$-brick.
\end{itemize}
Then
\begin{itemize}
\item $(Z_n)_{n \le 0}$ is a Markov process which generates a
$(r_n)$-adic filtration;
\item for every infinite subset $D$ of $\zzf_-$,
$(\fc^Z_{n})_{n \in D}$ is standard if and only if $2n-1 \notin D$
for infinitely many $n \le 0$.
\end{itemize}
In particular, the filtration
$(\fc^Z_{2n-1})_{n \le 0}$ is at the threshold of standardness.
\end{theo}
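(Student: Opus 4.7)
The Markov and $(r_n)$-adic properties follow at once from the strong brick structure of Lemma~\ref{constraints}: conditional on $\fc^Z_{2n-2}$, the variable $Z_{2n-1}=g_n(Z_{2n-2},Z_{2n})$ is uniform on $f_n^{-1}(\{Z_{2n-2}\})$ (a set of size $r_{2n-1}$), because $Z_{2n}$ is independent of the past; conditional on $\fc^Z_{2n-1}$, the variable $Z_{2n}$ is uniform on the $r_{2n}$-element set $S(Z_{2n-1})$. The substance of the theorem is thus the characterisation of subsets $D$ for which the extracted filtration is standard.

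For the direction ``standard implies infinitely many $2n-1$ skipped'', I argue the contrapositive: if $\{2n-1:n\le -K\}\subseteq D$ for some $K\ge 0$, then $(\fc^Z_{2n-1})_{n\le -K}$ is extracted from $(\fc^Z_m)_{m\in D}$, so it suffices to show that this subfiltration fails I-cosiness at $X=Z_{-2K-1}$. Consider any non-anticipative coupling $(Z'_n)_{n\le 0}$, $(Z''_n)_{n\le 0}$ of $(\fc^Z_{2n-1})_{n\le -K}$. Even though immersion is assumed only at odd times, the Markov property of $Z'$ combined with immersion gives, for every $n\le -K$, that the conditional law of $Z'_{2n}$ given the joint past $\fc^{Z'}_{2n-1}\vee\fc^{Z''}_{2n-1}$ coincides with its law given $Z'_{2n-1}$ alone---which is precisely condition~(b) in the definition of ``non-anticipative coupling of the triple $(Z_{2n-2},Z_{2n-1},Z_{2n})$''. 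Hence the proof of Lemma~\ref{strong bricks are bricks} applies verbatim and yields the brick inequality
$$\bar{\P}[Z'_{2n}\neq Z''_{2n}\mid\sigma(Z'_{2n-2},Z''_{2n-2})]\ge (1-\alpha_n)\,\one_{[Z'_{2n-2}\neq Z''_{2n-2}]}$$
(using also that $[Z'_{2n-2}\neq Z''_{2n-2}]\subseteq[Z'_{2n-1}\neq Z''_{2n-1}]$ since $Z_{2n-2}=f_n(Z_{2n-1})$). Iterating backwards in the style of Theorem~\ref{Tsirelson's construction} and using $|F_{2N-2}|\ge 2$, I obtain $\bar{\P}[Z'_{-2K-2}\neq Z''_{-2K-2}]\ge\tfrac{1}{2}\prod_{k\le -K-1}(1-\alpha_k)>0$ whenever $\fc^{Z'}_{2N-1}\perp\fc^{Z''}_{2N-1}$; since $[Z'_{-2K-2}\neq Z''_{-2K-2}]\subseteq[Z'_{-2K-1}\neq Z''_{-2K-1}]$, this negates I-cosiness.

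For the converse, given any event $A\in\fc^Z_{\max D}$ depending on finitely many variables $(Z_m)_{m\in S}$, I pick $n_0$ with $2n_0-1\notin D$ and $2n_0\le\min S$, which is possible by hypothesis. I build a coupling from three mutually independent families $(Z^{(1)}_{2k})_{k\le n_0-1}$, $(Z^{(2)}_{2k})_{k\le n_0-1}$, $(Z^{(3)}_{2k})_{k\ge n_0}$ with the correct marginal laws, by setting $Z'_{2k}=Z^{(1)}_{2k}$, $Z''_{2k}=Z^{(2)}_{2k}$ for $k\le n_0-1$ and $Z'_{2k}=Z''_{2k}=Z^{(3)}_{2k}$ for $k\ge n_0$, the odd-indexed variables being then determined by the $g_n$. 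Then $Z'_m=Z''_m$ for every $m\ge 2n_0$, so $A'=A''$ almost surely, and $\fc^{Z'}_{d_k}\perp\fc^{Z''}_{d_k}$ for every $d_k\le 2n_0-2$ (they lie in the disjoint pools $(Z^{(1)})$ and $(Z^{(2)})$). The main obstacle is verifying that this coupling is non-anticipative with respect to $(\fc^Z_{m})_{m\in D}$: for every $k$, one checks that $\fc^{Z'}_{d_{k+1}}$ is $\sigma(Z^{(1)},Z^{(3)})$-measurable while the extra information in $\fc^{Z''}_{d_k}$ beyond $\fc^{Z'}_{d_k}$ is $\sigma(Z^{(2)})$-measurable, and the three pools being mutually independent yields the required conditional independence. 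This step crucially uses $2n_0-1\notin D$: otherwise $Z'_{2n_0-1}=g_{n_0}(Z^{(1)}_{2n_0-2},Z^{(3)}_{2n_0})$ would lie inside some $\fc^{Z'}_{d_k}$ and, through its common factor $Z^{(3)}_{2n_0}$ with $Z''_{2n_0-1}$, the joint past would acquire information linking the three pools, breaking immersion. A standard separability argument extends I-cosiness from finitely supported events to arbitrary elements of $\fc^Z_{\max D}$. The particular assertion at the end of the theorem is then immediate: $(\fc^Z_{2n-1})_{n\le 0}$ corresponds to $D=\{2n-1:n\le 0\}$ with $\{n:2n-1\notin D\}$ empty, so it is not standard, yet every strict extraction from it skips infinitely many odd times and is therefore standard by the characterisation.
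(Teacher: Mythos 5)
Your argument for the Markov/adic structure and for the non-standardness direction is essentially the same as the paper's: in both cases one shows that a non-anticipative coupling of the odd-time filtration must satisfy the brick inequality at each step (using immersion at the odd times plus the fact that $Z_{2n}$ is a deterministic function of $Z_{2n+1}$), which after iteration bounds $\bar{\P}[Z'_{-2K-1}\ne Z''_{-2K-1}]$ away from zero and negates I-cosiness. You state the recursion with conditioning on $\sigma(Z'_{2n-2},Z''_{2n-2})$ rather than on the joint past $\gc_{2n-1}$ as the paper does, but this is only a cosmetic difference via the tower property, and the conclusion is the same.

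Your standardness direction, however, takes a genuinely different route. The paper does not verify I-cosiness at all for this half: it enlarges $D$ to contain all even times (harmless, since extraction preserves standardness), observes that the extracted filtration is then the natural filtration of $(Z_n)_{n\in D}$ (because $Z_n$ for the removed odd $n$ is a deterministic function of $(Z_{n-1},Z_{n+1})$), and then exhibits explicit innovations $Y_n$ (equal to $Z_n$ or to the uniform innovation $U_n$, according to whether $n-1\in D$) showing that the filtration is literally of product type. This is shorter and structurally more informative, as it identifies the generating sequence of independent variables directly. Your approach instead constructs an explicit non-anticipative self-coupling with three independent pools of even-time variables and verifies I-cosiness; the verification of the immersion conditions across the splice at time $2n_0$ is a bit delicate (and is indeed precisely where the hypothesis $2n_0-1\notin D$ is needed, as you correctly note), and you also need the separability reduction to cylinder events. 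Both arguments are correct; the paper's gets the stronger conclusion (``product type'' rather than merely ``I-cosy'') with less bookkeeping, while your coupling construction has the merit of handling both directions of the characterisation uniformly through the I-cosiness criterion.
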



\begin{proof}[Proof of theorem~\ref{simple construction}]
We now prove the statements.

{\bf Proof that $(Z_n)_{n \le 0}$ is a Markov process and generates
a $(r_n)$-adic filtration}

First, note that the filtration $(\fc^Z_{2n})_{n \le 0}$ is generated
by the independent random variables $(Z_{2n})_{n \le 0}$ since
for every $n \le 0$, $Z_{2n-1}$ is a deterministic function of
$(Z_{2n-2},Z_{2n})$. Hence, for every $n \le 0$,
$$\fc^Z_{2n-2} = \sigma(Z_{2n-2}) \vee \fc^Z_{2n-4},$$
Moreover, since $Z_{2n-2}$ is a deterministic function of $Z_{2n-1}$,
$$\fc^Z_{2n-1} = \sigma(Z_{2n-1}) \vee \fc^Z_{2n-2}= \sigma(Z_{2n-1}) 
\vee \fc^Z_{2n-4}.$$
By independence of $(Z_{-2n-2},Z_{2n-1},Z_{-2n})$ and $\fc^Z_{2n-4}$, we get
$$\lc(Z_{2n-1}|\fc^Z_{2n-2}) = \lc(Z_{2n-1}|\sigma(Z_{2n-2})),$$
$$\lc(Z_{2n}|\fc^Z_{2n-1}) = \lc(Z_{2n}|\sigma(Z_{2n-1})).$$
The Markov property follows. But for every $n \le 0$, 
$(Z_{2n-2},Z_{2n-1},Z_{2n})$ is an $(r_{2n-1},r_{2n})$-adic
$\alpha_n$-brick. The $(r_n)$-adic character of $\fc^Z$ follows.

{\bf Proof that $(\fc^Z_n)_{n \in D}$ is not standard when $D$ contains 
all but finitely many odd negative integers}

First, we show that $(\fc^Z_{2n-1})_{n \le 0}$ is not standard. To do this,
we check that
the random variable $Z_{-1}$ does not satisfy the I-cosiness criterion.
Note that $(\fc^Z_{2n-1})_{n \le 0}$ is the natural filtration of
$(Z_{2n-1})_{n \le 0}$ only since for every $n \le 0$,
$Z_{2n-2}$ is some deterministic function $f_n$ of $Z_{2n-1}$.

Let $(Z'_{2n-1})_{n \le 0}$ and $(Z'_{2n-1})_{n \le 0}$ be two copies of the
process $(Z_{2n-1})_{n \le 0}$, defined on some probability space
$(\bar{\Omega},\bar{\ac},\bar{\P)}$.
Set  $Z'_{2n-2} = f_n(Z'_{2n-1})$ and $Z''_{2n-2} = f_n(Z''_{2n-1})$
for every $n \le 0$.
Then $(Z'_{n})_{n \le 0}$ and $(Z''_{n})_{n \le 0}$ are copies of the
process $(Z_n)_{n \le 0}$. Moreover, $(\fc^{Z'}_{2n-1})_{n \le 0}$ and
$(\fc^{Z''}_{2n-1})_{n \le 0}$ are the natural filtrations of
$(Z'_{2n-1})_{n \le 0}$ and $(Z''_{2n-1})_{n \le 0}$.

Assume that these filtrations are immersed in some filtration
$(\gc_{2n-1})_{n \le 0}$.
Then, for every $n \le -1$,
$$\lc(Z'_{2n+1}|\gc_{2n-1}) = \lc(Z'_{2n+1}|\fc^{Z'}_{2n-1})
= \lc(Z'_{2n+1}|\sigma(Z'_{2n-1})),$$
and since $Z'_{2n}$ is a deterministic function of $Z'_{2n+1}$,
$$\lc(Z'_{2n}|\gc_{2n-1}) 
= \lc(Z'_{2n}|\sigma(Z'_{2n-1})).$$
The same holds with the process $Z''$.

For any distinct values $z',z''$ in $F_{2n-1}$, one has on the event
$[Z'_{2n-1}=z'\ ;\ Z''_{2n-1}=z'']$,
\begin{eqnarray*}
\P[Z'_{2n} = Z''_{2n}|\gc_{2n-1}]
&=& \sum_{z \in F_{2n}} \P[Z'_{2n} = z\ ;\ Z''_{2n}=z|\gc_{2n-1}]\\
&\le& \sum_{z \in F_{2n}} \P[Z'_{2n} = z|\gc_{2n-1}] \wedge \P[Z''_{2n}=z|\gc_{2n-1}]\\
&=& \sum_{z \in F_{2n}} \P[Z'_{2n} = z|Z'_{2n-1}=z'] \wedge
\P[Z''_{2n}=z|Z''_{2n-1}=z'']\\
&=& \sum_{z \in F_{2n}} \P[Z_{2n} = z|Z_{2n-1}=z'] \wedge \P[Z_{2n}=z|Z_{2n-1}=z'']\\
&\le& \alpha_n.
\end{eqnarray*}
Hence, since $[Z'_{2n+1} = Z''_{2n+1}] \subset [Z'_{2n} = Z''_{2n}]$,
\begin{eqnarray*}
\bar{\P}[Z'_{2n+1} = Z''_{2n+1}|\gc_{2n-1}]
&\le& \bar{\P}[Z'_{2n} = Z''_{2n}|\gc_{2n-1}]\\
&\le& \alpha_n\one_{[Z'_{2n-1} \ne Z''_{2n-1}]} + \one_{[Z'_{2n-1} = Z''_{2n-1}]}.
\end{eqnarray*}
Taking the complements, one gets
$$\bar{\P}[Z'_{2n+1} \ne Z''_{2n+1}|\gc_{2n-1}] \ge
(1-\alpha_n) \one_{[Z'_{2n-1} \ne Z''_{2n-1}]}.$$
A simple recursion yields
$$\P[Z'_{-1} \ne Z''_{-1}|\gc_{2n-1}] \
\ge \prod_{n \le k \le -1} (1-\alpha_k)\ \one_{[Z'_{2n-1} \ne Z''_{2n-1}]}.$$
Taking the expectations, one gets
$$\P[Z'_{-1} \ne Z''_{-1}] \
\ge \prod_{n \le k \le -1} (1-\alpha_k)\ \P[Z'_{2n-1} \ne Z''_{2n-1}].$$
Assume now that that for some $N > -\infty$, the $\sigma$-fields
$\fc'_{2N-1}$ and $\fc''_{2N-1}$ are independent. Then for every $n \le N$,
$$\P[Z'_{2n-1} \ne Z''_{2n-1}] = 1-\frac{1}{|F_{2n-1}|} \ge \frac{1}{2},$$
since $Z'_{2n-1}$ and $Z''_{2n-1}$ are independent and uniform on $F_{2n-1}$.
Going to the limit yields
$$\P[Z'_{-1} \ne Z''_{-1}] \ge \frac{1}{2} \prod_{k \le -1} (1-\alpha_k) > 0,$$
which shows that $Z_{-1}$ does not satisfy the I-cosiness criterion.

Thus $(\fc^Z_{2n-1})_{n \le 0}$ is not standard. Thus, if $D$ is any subset 
of $\zzf_-$ which contains all odd negative integers, 
the filtration $(\fc^Z_n)_{n \in D}$ is not standard (since standardness 
is preserved by extraction). This conclusion still holds
when $D$ contains all but finitely many odd negative integers 
(since standardness is an asymptotic property). 

{\bf Proof that $(\fc^Z_n)_{n \in D}$ is standard when $D$ skips
infinitely many odd negative integers}

Since standardness is preserved by extraction, one only needs to consider
the case where $D$ contains all even non-positive numbers. In this case,
the filtration $(\fc^Z_n)_{n \in D}$ is generated by $(Z_n)_{n \in D}$ only. 
Indeed, if $n$ is any integer in $\zzf_- \setminus D$, then $n$ is odd, 
hence $n-1 \in D$,
$n+1 \in D$ and $Z_n$ is a function of $(Z_{n-1},Z_{n+1})$.

For each $n \le 0$, the conditional law $\lc(Z_n|\fc^Z_{n-1})=\lc(Z_n|Z_{n-1})$
is (almost surely) uniform on some random subset of $F_n$ with $r_n$ elements.
By fixing a total order on the set $F_n$, one can construct an uniform random
variable uniform $U_n$ on $\odc 1,r_n \fdc$, independent of $\fc^Z_{n-1}$, 
such that $Z_n$ is a function of $Z_{n-1}$ and $U_n$.
Set $Y_n = Z_n$ if $n-1 \in D$ (which may happen only for even $n$)
and $Y_n = U_n$ otherwise. Then $Y_n$ is $\fc^Z_n$-measurable. This shows
that $\fc^Y_n \subset \fc^Z_n$ for every $n \in D$.

Let us prove the reverse
inclusion. Fix $n \in D$, and call $m \le n$ the integer
such that $m-1 \notin D$ but $k \in D$ for all $k \in \odc m,n \fdc$.
Then $Z_n$ is $\fc^Y_n$-measurable as a function of
$Y_m=Z_m,Y_{m+1}=U_{m+1},\ldots,Y_n=U_n$.

Last, for every $n \in D$, $Y_n$ is independent of
$\fc^Z_{n-1}$ if $n-1 \in D$ and $Y_n$ is independent of
$\fc^Z_{n-2}$ otherwise. This shows the independence of
the random variables $(Y_n)_{n \in D}$. Hence the filtration
$(\fc^Z_n)_{n \in D}$ is of product type, which completes the proof.
\end{proof}


\section{Annex: some basic facts on standardness}~\label{annex}

We summarize here the main definitions and results used in this paper.
A complete exposition can be found in~\cite{Emery-Schachermayer}.

Recall that we work with filtrations indexed by the non-positive integers
on a probability space $(\Omega,\ac,\P)$, and that all the
sub-$\sigma$-fields of $\ac$ that we consider here are assumed to be
complete and essentially separable with respect to $\P$.

Most of the time, the probability measure $\P$ is not explicitly mentioned
when we deal with filtrations. Yet, it actually plays an important
role and the true object of study are filtered probability spaces
$(\Omega,\ac,\P,(\fc_n)_{n \le 0})$.

\subsection{Isomorphisms of filtered probability spaces}

The definition of isomorphism is not as simple as one could expect.

Let $\F = (\F_n)_{n \le 0}$ and $\F' = (\F_n')_{n \le 0}$ be
filtrations on $(\Omega,\mathcal{A},\P)$ and $(\Omega',\mathcal{A}',\P')$.

\begin{defi}
An isomorphism of filtered probability spaces from
$(\Omega,\mathcal{A},\P,\F)$ into $(\Omega',\mathcal{A}',\P',\F')$
is a bijective (linear) application from the space
${\bf L}^0(\Omega,\F_{\infty},\P)$ of the real random
variables on $(\Omega, \F_{\infty}, \P)$
into ${\bf L}^0(\Omega',\F_{\infty}',\P')$
which preserves the laws of the random variables,
commutes with Borelian applications, and sends $\F$ on $\F'$.
\end{defi}

By definition, saying that an isomorphism $\Psi$ sends $\F$ on $\F'$
means that for every $n \le 0$, the random variables $\Psi(X)$ for $X \in 
{\bf L}^0(\Omega,\F_{n},\P)$ generate $\fc'_{n}$. Saying that $\Psi$ 
commutes with Borelian 
applications means that for every sequence $(X_n)_{n \ge 1}$ of real 
random variables on $(\Omega,\mathcal{A},\P)$, and every Borelian 
application $F : \R^\infty \to \R$, 
$$\Psi \big( F \circ (X_n)_{n \ge 1} \Big) = F \circ (\Psi(X_n))_{n \ge 1}.$$
In particular, this equality holds when $F$ is given by
$F((x_n)_{n \ge 1}) = \alpha_1 x_1 + \alpha_2 x_2$ with $(\alpha_1,\alpha_2) 
\in \R^2$, which shows that $\Psi$ is linear.

Of course, any bimeasurable application $\psi$ from $(\Omega,\F_{\infty})$ to
$(\Omega',\F_{\infty}')$ which sends $\P$ on $\P'$ induces an isomorphism
$\Psi$ from $(\Omega,\mathcal{A},\P,\F)$ into $(\Omega',\mathcal{A}',\P',\F')$,
defined by $\Psi(X) = X \circ \phi^{-1}$.
Yet, the converse is not true: an isomorphism of filtered spaces from
$(\Omega,\F_{\infty})$ to $(\Omega',\F_{\infty}')$ is not
necessarily associated to some bimeasurable application from
$\Omega$ to $\Omega'$ which sends $\P$ on $\P'$.

As a matter of fact, the most interesting objects associated to a
probability space $(\Omega,\mathcal{A},\P)$ are the random variables
and not the elements of $\Omega$.
Note that for any sequence $(X_n)_{n \le 0}$ of random variables
defined on $(\Omega,\mathcal{A},\P)$, the filtrations
which are isomorphic to the natural filtration of $(X_n)_{n \le 0}$
are exactly the filtrations of the copies of $(X_n)_{n \le 0}$
on arbitrary probability spaces.

\subsection{Immersion of filtrations}

Let $\F = (\F_n)_{n \le 0}$ and $\G = (\G_n)_{n \le 0}$ be
filtrations on $(\Omega,\mathcal{A},\P)$.

\begin{defi}
One says that $\F$ is immersed into $\G$,
if, for every $n \le 0$, $\F_n \subset \G_n$ and $\F_n$
is independent of $\G_{n-1}$ conditionally on $\F_{n-1}$.
Equivalently, $\F$ is immersed into $\G$ if and only if
every martingale in $\F$ is still a martingale in $\G$.
\end{defi}

Immersion is {\Rd } stronger than mere inclusion. If $\F$ is
immersed into $\G$, the additional information contained
in $\G$ cannot give information on $\F$ in advance:
intuitively, the independence of $\F_n$ and $\G_{n-1}$
conditionally on $\F_{n-1}$ means that $\G_{n-1}$ gives no
more information on $\F_n$ than $\F_{n-1}$ does.

The notion of immersion is implicitely present in many usual
situations. For instance, when one considers a Markov
process $X$ {\it in some filtration $\gc$},
it means that the natural filtration of $X$ is immersed in $\gc$.

\subsection{Immersibility and standardness}

The notion of immersion can be weakened to provide
a notion invariant by isomorphism.

\begin{defi}
Let $\F = (\F_n)_{n \le 0}$ and $\G' = (\G_n')_{n \le 0}$ be
filtrations on $(\Omega,\mathcal{A},\P)$ and $(\Omega',\mathcal{A}',\P')$.
One says that $\F$ is immersible into $\G'$ if there exists a filtration
$\F'$ on $(\Omega',\mathcal{A}',\P')$, isomorphic to $\F$, such that
$\F'$ is immersed into $\G'$.
\end{defi}

We can now define the standardness of filtrations.

\begin{defi}
A filtration is standard if it is immersible into a product-type filtration.
\end{defi}

Because of Kolmogorov's 0-1 law, any filtration must have a trivial tail 
$\sigma$-field in order to be standard, but this necessary condition
is not sufficient. In~\cite{Vershik}, Vershik established two
different characterisations of standardness in the context of decreasing
sequences of measurable partitions, which were extended and reformulated 
into a probabilistic language and called Vershik's ``first level'' and
``second level'' criteria by \'Emery and
Schachermayer~\cite{Emery-Schachermayer}.
\'Emery and Schachermayer
also introduced a new standardness criterion, namely the I-cosiness criterion.

\subsection{I-cosiness criterion}

Let $\F = (\F_n)_{n \le 0}$ be a filtration on $(\Omega,\mathcal{A},\P)$.

\begin{defi}
Let $R$ be any $\fc_0$-measurable real random variable $R$.
One says that $R$ satisfies I-cosiness criterion for $(\F_n)_{n \le 0}$
(to abbreviate, we say that I($R$) holds) if for any positive real
number $\delta$, there exists a probability space
$(\overline{\Omega},\overline{\mathcal{A}},\overline{\P})$ supplied
with two filtrations $\F'$ and $\F''$ such that:
\begin{itemize}
\item the filtrations $\F'$ and $\F''$ are isomorphic to the filtration $\F$;
\item the filtrations $\F'$ and $\F''$ are immersed into $\F' \vee \F''$;
\item there exists an integer $n_0<0$ such that the $\sigma$-fields
$\F'_{n_0}$ and $\F''_{n_0}$ are independent;
\item the copies $R'$ and $R''$ of $R$ given by the isomorphisms
of the first condition are such that $\overline{\P}[|R'-R''| \ge \delta]
\le \delta$.
\end{itemize}
One says that $\F$ is I-cosy when I($R$) holds for every
$R \in L^0(\Omega,\fc_0,\P)$.
\end{defi}

The definition of I-cosiness was implicitly used by Smorodinsky in
\cite{Smorodinsky} to prove that the dyadic split-words filtration is
not standard (although Smorodinsky uses a different terminology).
The I stands for independence, to distinguish I-cosiness from other
variants of cosiness.

Intuitively, the conditions defining I($R$) mean that one can couple two
copies of $\F$ in a non-anticipative way so that old enough independent
initial conditions have weak influence on the final value of $R$.

Laurent noticed that if $I(R)$ holds, then $I(\phi(R))$ holds for every Borel
function $\phi$ from $\R$ to $\R$. Hence, to prove that $\F$ is I-cosy,
it is sufficient to check that $I(R)$ for {\it one} real random variable
generating $\F_0$.

It is also sufficient and sometimes handful to check $I(R)$ for all
random variables with values in an arbitrary finite set,
with the discrete distance $\one_{[R' \ne R'']}$ replacing $|R'-R''|$
in the definition of $I(R)$.

I-cosiness provides a standardness criterion.

\begin{theo}~\label{I-cosiness criterion}
{\bf (\'Emery and Schachermayer~\cite{Emery-Schachermayer})}
$\F$ is standard if and only if $\F$ is I-cosy.
\end{theo}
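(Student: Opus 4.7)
I prove the two implications separately. The direct implication ``standard $\Rightarrow$ I-cosy'' is almost immediate from the definitions, whereas the converse is the deep part and follows the approach of \'Emery and Schachermayer.

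For ``standard $\Rightarrow$ I-cosy'', one may assume, up to isomorphism and enlarging the probability space, that $\F$ is immersed in a product-type filtration $\H$ generated by a sequence $(V_n)_{n \le 0}$ of independent random variables. By the remark recalled in the annex, it suffices to check the I-cosiness criterion for a random variable $R$ with values in some finite set, using the discrete distance. Given $\delta > 0$, since $\fc_0 \subset \H_0 = \sigma(V_k : k \le 0)$, the martingale convergence theorem provides $N \ge 1$ and a $\sigma(V_{-N}, \ldots, V_0)$-measurable random variable $\tilde R$ with $\P[R \ne \tilde R] \le \delta/2$. On an enlarged probability space I would then build two \emph{independent} copies $(V'_n)_{n \le -N-1}$, $(V''_n)_{n \le -N-1}$ together with a single common sequence $(V^*_n)_{-N \le n \le 0}$, and glue them by setting $V'_n = V''_n = V^*_n$ for $n \ge -N$. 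The resulting product-type filtrations $\H'$ and $\H''$ are immersed in $\H' \vee \H''$ and are independent up to time $n_0 = -N-1$; transporting the immersion $\F \hookrightarrow \H$ gives immersions $\F' \hookrightarrow \H'$ and $\F'' \hookrightarrow \H''$, hence $\F'$ and $\F''$ are immersed in $\F' \vee \F''$. The corresponding copies satisfy $R' = \tilde R' = \tilde R'' = R''$ off an event of probability at most $\delta$, whence $\bar\P[R' \ne R''] \le \delta$.

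For the converse ``I-cosy $\Rightarrow$ standard'', the strategy is to use I-cosiness to produce, in the limit, a standard dilation of $\F$. Concretely, given a random variable $R$ generating $\fc_0$ (with values in a finite set, say), I would apply the I-cosiness property with parameters $\delta_k \to 0$ to obtain a sequence of non-anticipative self-couplings of $\F$ with pasts independent at times $n_k \to -\infty$ and with $R' \ne R''$ on events of vanishing probability. A tightness and diagonal extraction argument would then produce a limiting joint law under which $R' = R''$ almost surely and the pasts of the two copies are asymptotically independent; this limiting joining supplies, via a Rokhlin-type construction, the independent innovation that captures the new information in $\fc_0$ relative to $\fc_{-1}$ in a way compatible with an immersion of $\F$ into a product-type filtration. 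Iterating at each level yields the required independent sequence.

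The main obstacle lies in this converse direction: one must establish that the immersion and asymptotic-independence properties survive the passage to the limit in the space of self-joinings of the filtered space, and that the data extracted in this limit fit together coherently across all times to build a genuine product-type dilation. This is the technical heart of \'Emery and Schachermayer's argument in \cite{Emery-Schachermayer} and is considerably more delicate than the direct implication, which is essentially formal.
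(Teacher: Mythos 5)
The paper does not prove this theorem: it is stated in the annex as a cited result of \'Emery and Schachermayer~\cite{Emery-Schachermayer}, and only the implication ``standard $\Rightarrow$ I-cosy'' is actually used in the paper (contrapositively, to conclude non-standardness from a failure of I-cosiness in the Tsirelson-type constructions). So there is no in-paper argument to compare against; I can only assess your proposal on its own terms.

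Your treatment of the direction ``standard $\Rightarrow$ I-cosy'' is correct and is the standard argument. Immersing $\F$ into a product-type filtration $\H$ generated by $(V_n)_{n\le 0}$, approximating $R$ by a $\sigma(V_{-N},\ldots,V_0)$-measurable $\tilde R$, and then gluing two independent copies of the past to a single common tail $(V^*_{-N},\ldots,V^*_0)$ does produce the required coupling. The transitivity step is fine: $\F'$ is immersed in $\H'\vee\H''$ and $\F'\subset \F'\vee\F''\subset \H'\vee\H''$, so $\F'$ is immersed in $\F'\vee\F''$, and likewise for $\F''$. The estimate $\bar\P[R'\ne R'']\le\delta$ follows because $\tilde R'=\tilde R''$ by construction.

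The converse sketch, however, does not lead anywhere as written. Your plan is to let $\delta_k\to 0$, extract a limiting joint law by tightness, and then argue that under the limit $R'=R''$ a.s.\ while the pasts are ``asymptotically independent''. The difficulty is that the independence times $n_0(k)$ supplied by I-cosiness generally tend to $-\infty$ as $\delta_k\to 0$, so no fixed finite time survives the passage to the limit; ``asymptotic independence'' is a property of the \emph{sequence} of joinings and is not inherited by a single limiting joining. In fact, in the typical case the limit is the diagonal self-joining of $\F$, which encodes no independence whatsoever and therefore cannot supply the ``Rokhlin-type'' independent innovation you invoke. The actual argument in~\cite{Emery-Schachermayer} does not proceed by one compactness limit of joinings; it goes through Vershik's first- and second-level standardness criteria, with a recursive (level-by-level) approximation of the filtration by product-type ones, tracking Kantorovich-type distances on finite trees of random variables. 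Your proposal neither reproduces that inductive mechanism nor identifies an alternative way to extract the independent complement at each step, so as it stands the hard direction is missing its key idea rather than merely its technical details.
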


\end{document}